\newtheorem{theo}{Theorem}[section]
\newtheorem{coro}[theo]{Corollary}
\newtheorem{prop}[theo]{Proposition}
\theoremstyle{definition}
\newtheorem{defi}[theo]{Definition}
\newtheorem{rmk}[theo]{Remark}
\newtheorem{case}{Case}
\newtheorem*{caseA}{Case A}
\newtheorem*{caseB}{Case B}
\numberwithin{equation}{section}
\newcommand{\Z}{{\mathbb Z}}
\newcommand{\C}{{\mathbb C}}
\newcommand{\Q}{{\mathbb Q}}
\newcommand{\M}{{\mathcal M}}
\newcommand{\E}{{\mathcal E}}
\renewcommand{\S}{{\mathcal S}}
\renewcommand{\H}{{\mathcal H}}
\renewcommand{\O}{{\mathcal O}}
\newcommand{\G}{{\Gamma}}
\renewcommand{\a}{{\alpha}}
\renewcommand{\b}{{\beta}}
\newcommand{\g}{{\gamma}}
\renewcommand{\l}{{\lambda}}
\renewcommand{\o}{{\omega}}
\renewcommand{\i}{{\infty}}
\newcommand{\p}{{\prime}}
\newcommand{\vp}{{\varphi}}
\renewcommand{\sl}{{\textrm{SL}_2(\mathbb{Z})}}
\newcommand{\m}{{\begin{pmatrix}a & b\\ c & d \end{pmatrix}}}
\newcommand{\Qbar}{{\overline{\mathbb{Q}}}}
\newcommand{\Nmid}{{\hspace{.5mm} \nmid \hspace{.5mm} }}
\DeclareMathOperator{\Gal}{Gal}
\DeclareMathOperator{\ord}{ord}
\DeclareMathOperator{\re}{Re}
\newcommand{\A}{{r}}
\newcommand{\B}{{s}}
\newcommand{\lp}{\ensuremath{(}}
\newcommand{\rp}{\ensuremath{)}}
\renewcommand{\.}{{\,}}
\newcommand{\ra}{{\rule[-3.1mm]{0mm}{7mm}}}
\newcommand{\rb}{{\rule[-2mm]{0mm}{6.5mm}}}
\begin{document}
\title{Hecke Eigenforms As Products Of Eigenforms}
\author{Matthew L. Johnson}
\address{Department of Mathematics \\ The University of Arizona}
\email{johnsoma@math.arizona.edu}
 
\begin{abstract}
We investigate when the product of two Hecke eigenforms for $\G_1(N)$ is again a Hecke eigenform. In this paper we prove that the product of two normalized eigenforms for $\G_1(N)$, of weight greater than $1$, is an eigenform only 61 times, and give a complete list. Duke \cite{duke99} and Ghate \cite{ghate00} independently proved that with eigenforms for $\sl$, there are only 16 such product identities. Ghate \cite{ghate02} also proved related results for $\G_1(N)$, with $N$ square free. Emmons \cite{emmons05} proved results for eigenforms away from the level, for prime level. The methods we use are elementary and effective, and do not rely on the Rankin-Selberg convolution method used by both Duke and Ghate.
\end{abstract}
\maketitle

\section{Introduction} \label{secc1}
In this paper we consider the following question: when does the equation
\[
h=f\cdot g
\]
have solutions with $f$, $g$ and $h$ Hecke eigenforms with respect to $\G_1(N)$, for any level $N$? This question was independently considered by Duke \cite{duke99} and Ghate \cite{ghate00} for eigenforms with respect to $\sl $ (i.e., $N=1$). They proved that there are only 16 such identities. 
Ghate \cite{ghate02} also proved that for $N$ square-free and $f$, $g \in \G_1(N)$ eigenforms of weight greater than $2$, the product $fg$ cannot be an eigenform when certain conditions on $f$ and $g$ are satisfied (note that Ghate uses a slightly more general definition of eigenforms than we do). Emmons \cite{emmons05} classified all products $h=fg$, with $f,g$, and $h$ eigenforms with respect to $\G_0(p)$ away from the level $p$, with $p \geq 5$ prime.

In this paper we extend Ghate's result, by removing the condition that the level be square-free, and by also considering weight $2$ eigenforms. We prove that for eigenforms $f$, $g$ and $h$ of level $N$ (with no restrictions on $N$) and $f$ and $g$ each having weight greater than $1$, there are only 61 eigenform products of the form $h=fg$. A complete list is given in Table \ref{table6}. For additional observations see Remark \ref{remm2}. In the process we give a new proof of the level $1$ case, which was previously addressed by both Duke and Ghate.

\begin{rmk}
Given an eigenform $f$ of level $N$, $f$ will also be an eigenform of level $M$ for any $M$ such that $N|M$ and $p|M \Rightarrow p|N$, for $p$ prime. Therefore an eigenform product identity $f g =h $ for level $N$ will also exist for each such level $M$ above $N$. We count the identity only once, for the smallest level it occurs. In addition, we regard all scalar multiples of an identity as the same identity, and count them only once.
\end{rmk}

\begin{theo} \label{thmm1}
There are only 61 tuples $(N ,k ,l ,\psi ,\vp )$, $l \geq k>1$, such that there exist Hecke eigenforms $f \in \M_k (N ,\psi )$ and $g \in \M_l (N ,\vp )$ with $fg$ also an eigenform. When $l>k$ there is one eigenform identity per tuple, and for all but four of the 18 tuples occurring with $l=k$ this is true as well. In each of the four exceptions, there are two eigenform identities associated to the two tuples $(N ,k ,k ,\vp ,\psi )$ and $(N ,k ,k ,\psi ,\vp )$. This results in a total of 61 eigenform product identities. A complete list of all identities is given in Table \ref{table6}.
\end{theo}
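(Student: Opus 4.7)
My plan is to carry out an elementary Fourier-coefficient analysis rather than invoke Rankin--Selberg convolutions. Normalize $f = \sum a_n q^n$ and $g = \sum b_n q^n$ so that $a_1 = b_1 = 1$, and write $h = fg = \sum c_n q^n$. The first step is to rule out the case where both $f$ and $g$ are cusp forms: if $a_0 = b_0 = 0$ then $c_1 = a_0 + b_0 = 0$, so $fg$ starts at $q^2$ and cannot be a nonzero scalar multiple of any normalized eigenform (comparing the lowest nonzero coefficient rules out both cuspidal and Eisenstein $h$). Relabeling if necessary, I may therefore assume $f$ is an Eisenstein series with nonzero constant term.

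The central technical tool will be the explicit Fourier expansion of the Eisenstein series attached to a character factorization $\psi = \chi_1 \chi_2$, whose $n$-th coefficient is the multiplicative twisted divisor sum $a_n = \sum_{d \mid n} \chi_1(n/d) \chi_2(d) d^{k-1}$. Combining this with Hecke multiplicativity $c_{mn} = c_m c_n$ for $(m,n)=1$, the prime-power recursion $c_{p^2} = c_p^2 - \psi(p)\vp(p) p^{k+l-1}$, and the Cauchy-product identity $c_n = \sum_{i+j=n} a_i b_j$ forced by $fg = h$, I obtain polynomial-in-$p$ identities at every prime $p \Nmid N$. The mismatch in growth rates---the Eisenstein piece of $f$ contributing $\Theta(p^{k-1})$ against the Deligne bound $O(p^{(l-1)/2})$ for cuspidal $g$, or a comparable explicit expansion if $g$ is also Eisenstein---forces the dominant and sub-dominant coefficients in $p$ to match identically. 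Imposing this at all sufficiently large primes collapses the parameters $(N, k, l, \psi, \vp)$ to a finite set. Within that set I then enumerate: for each admissible tuple, write down bases of $\M_k(N, \psi)$ and $\M_l(N, \vp)$, compute products pair by pair, and test the eigenform condition. The four exceptions with $l = k$ should correspond to tuples with $\psi \neq \vp$, where the orderings $(\psi, \vp)$ and $(\vp, \psi)$ yield two genuinely distinct identities rather than one up to relabeling.

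The main obstacle I expect is the case analysis at composite and non-square-free $N$, which goes strictly beyond Ghate's square-free setting. Both the number of admissible character factorizations $\chi_1 \chi_2 = \psi$ and the dimensions of the Eisenstein subspaces grow with $N$, and the twisted Hecke relations at primes $p \mid N$ must be analyzed separately from those at primes of good reduction where Deligne-type bounds apply directly. Weight-$2$ Eisenstein series pose an additional difficulty, since the relevant weight-$2$ Eisenstein series is only quasi-modular and requires separate treatment via oldform/newform considerations. Once these technical issues are handled, the elementary growth estimates yield an explicit finite search space, and the list of 61 identities recorded in Table \ref{table6} emerges by direct computation.
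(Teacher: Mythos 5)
Your opening reduction (both factors cuspidal is impossible, so after relabeling $f$ is an Eisenstein series with nonzero constant term) matches the paper. But the core of your finiteness step does not work as stated, and it is not the paper's argument. You propose to extract a contradiction from growth rates in $p$: the Eisenstein coefficients of size $\Theta(p^{k-1})$ against the Deligne bound for a cuspidal $g$, read off from the Cauchy product $c_n=\sum_{i+j=n}a_ib_j$. The obstruction is that $c_p$ is not governed by $a_0b_p+a_pb_0$ alone; it contains the full shifted convolution $\sum_{i=1}^{p-1}a_ib_{p-i}$, whose trivial bound is far larger than the terms you want to isolate, and establishing the cancellation in such shifted convolution sums is precisely the analytic work that drives Duke and Ghate to the Rankin--Selberg method you say you are avoiding. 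Worse, even granting the comparison, it constrains at most $k$ and $l$: coefficient asymptotics at primes $p\nmid N$ are essentially insensitive to the level and the Nebentypus, so nothing in your plan bounds $N$ or the characters --- which is the entire point of going beyond level $1$ and square-free level. Your final enumeration also needs a termination criterion for verifying a proposed identity exactly (only finitely many coefficients can be computed); the paper uses Sturm's bound (Corollary \ref{corr1}) for this, and you do not address it.

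The paper's finiteness mechanism is different and genuinely elementary. Normalizing $f$ so that $a_f(0)=1$, Proposition \ref{propp1} forces $f=-\frac{2k}{B_{k,\psi}}E_k^{\one{1},\. \psi}$, and since the $q^2$-coefficient of the normalized eigenform $fg$ equals $-2k/B_{k,\psi}+c_2$ with $c_2$ an algebraic integer (Theorem \ref{thmm4}), the quantity $2k/B_{k,\psi}$ must itself be an algebraic integer. The lower bound $|B_{k,\chi}|\gg f_\chi^{k-1/2}$ coming from the functional equation (Proposition \ref{lemm1}), together with the Euler factors in \eqref{eqq6} for imprimitive characters, leaves only $41$ admissible pairs $(k,\psi)$ (Proposition \ref{propp2}); this bounds the weight of the Eisenstein factor, its character, and hence the level all at once. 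After that the paper never enumerates bases of $\M_l(N,\vp)$: for each fixed Eisenstein $f$ it solves for $c_2$ and $\B_2$ from the Hecke relations satisfied by the product, shows the expansion of $g$ is \emph{uniquely} determined term by term, and then certifies each surviving candidate with Corollary \ref{corr1}. The case where $g$ also has nonzero constant term is handled by a separate Bernoulli-number analysis of $E_k^{\one{1},\. \psi}\cdot E_l^{\one{1},\. \vp}$. You would need to replace your growth-rate step with an integrality argument of this kind (or supply genuine shifted-convolution estimates and, separately, a bound on the level) before the rest of your plan can proceed.
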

Hence the product of two eigenforms with respect to $\G_1(N)$, each of weight greater than 1, is \emph{never} an eigenform, save for 61 exceptions.

\begin{coro} 
There are only nine identities of the form $h=fg$, with $f$, $g$ and $h$ Hecke eigenforms of level $N>1$ and with the weights of $f$ and $g$ greater than $2$. These identities are given in Table \ref{table8} \lp{}in addition to being distributed among the 61 identities in Table \ref{table6}\rp{}. These nine identities occur at levels $3$, $4$ and $5$ and with the weight of $f$ and $g$ either $3$, $4$ or $5$. In addition there are 16 such identities at level $N=1$. They are listed in Table \ref{table4} and Table \ref{table5}.
\end{coro}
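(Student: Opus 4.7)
The plan is to derive this corollary as an essentially mechanical consequence of Theorem \ref{thmm1} for the case $N>1$, combined with the classifications of Duke \cite{duke99} and Ghate \cite{ghate00} in the level $1$ case. No new analytic input is required; the content of the corollary is purely a tabulation and filtering step.

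For $N>1$, I would begin by invoking Theorem \ref{thmm1}, which guarantees that every identity $h=fg$ of level $N$ with both $\mathrm{wt}(f),\mathrm{wt}(g) > 1$ appears among the $61$ tuples $(N,k,l,\psi,\vp)$ listed in Table \ref{table6}. The next step is to scan that finite list, retaining only those tuples with $N>1$ and with $k\geq 3$ and $l\geq 3$ (equivalently, both weights strictly greater than $2$). By direct inspection this filtering should leave exactly nine surviving identities, all at levels $N \in \{3,4,5\}$ and with weights $k,l \in \{3,4,5\}$. I would then reproduce these nine identities as Table \ref{table8} for the reader's convenience, even though they are already distributed among the $61$ entries of Table \ref{table6}.

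For $N=1$, I would appeal directly to the independent theorems of Duke and Ghate, each of which shows that there are exactly $16$ eigenform product identities over $\sl$. Since $\M_k(\sl) = 0$ for $k=2$ and any holomorphic form of weight $0$ is constant, every eigenform appearing in a nontrivial such identity automatically has weight at least $4$. Hence the constraint $\mathrm{wt}(f),\mathrm{wt}(g) > 2$ is satisfied for free by all $16$ identities, no further filtering being necessary, and they are already recorded in Tables \ref{table4} and \ref{table5}.

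The only conceivable obstacle is confirming exhaustiveness of the pass through Table \ref{table6} and reconciling the two weight conventions used in the Duke--Ghate papers with ours, but since Table \ref{table6} is finite and explicit this amounts to routine inspection rather than a genuine difficulty. All substantive mathematical content is already packaged into Theorem \ref{thmm1} for $N>1$ and into the Duke--Ghate classification for $N=1$; the corollary itself is pure bookkeeping.
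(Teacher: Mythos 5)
Your proposal is correct and matches the paper's (implicit) argument: the corollary is stated without a separate proof precisely because it follows by filtering Table \ref{table6} for $N>1$ and both weights at least $3$, which yields the nine identities of Table \ref{table8}, while the $N=1$ case is the Duke--Ghate classification in Tables \ref{table4} and \ref{table5}, where all weights are automatically at least $4$. Your observation that $\M_2(\sl)=0$ disposes of the weight constraint at level $1$ exactly as intended.
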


Our methods are elementary and effective, and do not rely on the Rankin-Selberg convolution that both Duke and Ghate use. Instead, we make extensive use of the multiplicative properties the Fourier coefficients of a Hecke eigenform must satisfy. Given an eigenform identity $f g =h $, either $f $ or $g $ must have a non-zero constant term, and therefore must be an Eisenstein series. The Fourier expansion of an Eisenstein series $f $ is easily determined by level, weight and Nebentypus. The expansion of a cuspidal eigenform, however, is not so easily determined. Using Proposition \ref{propp3} we are able to restrict our attention to a finite number of Eisenstein eigenforms. Given a fixed Eisenstein eigenform $f $, we use the multiplicative properties of the coefficients of $g $ and $f g $ to \emph{uniquely} determine the expansion of $g $ term by term. We then examine if this expansion corresponds to that of an eigenform. All computations are performed in Pari-gp \cite{PARI}.

If $fg \in \E_{k+l}(N,\psi \vp)$, for $fg$ to be an eigenform it is sufficient that the space $\E_{k+l}(N,\psi \vp)$ be one dimensional. Similarly, if $fg \in \S_{k+l}(N,\psi \vp)$, for $fg$ to be an eigenform it is sufficient that $\S_{k+l}(N,\psi \vp)$ be one dimensional. This condition is also necessary for level $1$. All 16 eigenform product identities of level $1$ occur as a result of dimensional considerations. One might think that this condition would be necessary for all levels $N$ as well. Every product identity results in the coefficients of the product being given by the convolution of the coefficients of $f$ and $g$. It seems unreasonable to expect that these coefficients would have the multiplicative properties needed for $fg$ to be an eigenform, unless forced to (see Theorem \ref{thmm3}). 

Nevertheless, we find that it is indeed possible for an eigenform product identity to exist for $\G_1(N)$ when the dimension of the relevant space is greater than one. There are only seven such identities, when the weights of $f$ and $g$ are both greater than 1. They are included in Table \ref{table6}, and correspond to the presence of the function $\Phi_{k,N}$ (see page \pageref{tagg2} for an explanation of this notation).

\section*{Acknowledgments} 
The author would like to thank his adviser Kirti Joshi for all his help and tireless work reading earlier drafts of this paper. Also he would like to thank Dinesh Thakur for his meticulous proof reading and suggestions, Aleksandar Petrov for his helpful comments and advice, and John Brillhart for the mentoring over the years. In addition, the author would like to thank Brad Emmons for sharing a copy of his dissertation. Finally, the author would like to thank his mother.

\section{Preliminaries} \label{secc2}
We review the definitions related to modular forms. For further details see \cite{diamondshurman05} or \cite{diamondim95}.
Given a function $g :\H \rightarrow \C $ and $\g =\m \in \sl $, we define the weight-$k $ operator $[\g ]_k$ by
\[
(g[\g ]_k)(z):=(cz+d)^{-k}g(\g (z)),
\]
where the action of $\g :\H \rightarrow \H $ is defined as usual by
\[
\g (z):=\frac{az+b}{cz+d}.
\]
The two subgroups of $\sl $ of interest to us are
\[
\G_0(N):= \left \{\m \in \sl : \m \equiv \begin{pmatrix}* & *\\ 0 & * \end{pmatrix} \Mod{N} \right\}
\]
and
\[
\G_1(N):= \left \{\m \in \sl : \m \equiv \begin{pmatrix}1 & *\\ 0 & 1 \end{pmatrix} \Mod{N} \right\}.
\]
The \emph{cusps} of a subgroup $\G$ of $\sl $ are the equivalence classes of $\Q \cup \{ \i \}$ under the action of $\G $.
We define a \emph{modular form} of weight $k$ with respect to $\G$ to be a holomorphic function $g :\H \rightarrow \C $ satisfying 
\[
g[\g]_k =g \quad \textrm{for all} \quad \g \in \G ,
\]
provided that $f$ is holomorphic at the cusps of $\G$ (see \cite{diamondshurman05} p.~16 for a precise definition of this last condition). A modular form $g$ with respect to $\G_1(N)$ will have a Fourier expansion of the form
\[
g(z)=\sum_{n=0}^\i a_g(n)q^n, \quad \textrm{where} \quad q=e^{2 \pi i z}.
\]
The space of modular forms of weight $k$ with respect to $\G$ is denoted by $\M_k(\G )$. A cusp form with respect to $\G $ is a modular form that vanishes at every cusp of $\G $. The subspace of cusp forms is denoted by $\S_k(\G )$. The Eisenstein subspace $\E_k(\G )$ is the orthogonal complement of $\S_k(\G )$ in $\M_k(\G )$, with respect to the Petersson inner product. We have the decomposition
\[
\M_k(\G )=\E_k(\G ) \bigoplus \S_k(\G ).
\]

\begin{defi}
A \emph{Dirichlet character} $\chi $ mod $N $ is a homomorphism $\chi :(\Z /N \Z )^\ast \rightarrow \C ^\ast $. We typically view $\chi $ as a function from $\Z \rightarrow \C$. This is done in the natural way by taking $\chi (n)=0$ when $(N ,n )>1$. Given a multiple $M$ of $N$, the character $\chi$ mod $N$ gives rise to a character $\chi^\p$ mod $M$ by
\[
\chi^\p(a)=
\begin{cases}
\chi(a) & \textrm{if} \quad (M,a)=1,\\
0 & \textrm{if} \quad (M,a)>1.
\end{cases}
\]
Note that $\chi^\p$ is determined completely by $\chi$ and the primes dividing $M$ that do not divide $N$. If $M$ consists solely of primes dividing $N$, then $\chi^\p$ and $\chi$ are identical when viewed as functions on $\Z$. We call a character \emph{primitive} if it does not arise from another character of smaller modulus. The \emph{conductor} of $\chi$ is defined to be the modulus of the primitive character that gives rise to $\chi$, and is denoted by $f_\chi$. Thus a character $\chi$ mod $N$ is primitive if and only if $f_\chi=N$. We write $\one{N}$ to denote the trivial character defined modulo $N$. Note that $\one{N}$ is primitive only when $N=1$. A character $\chi $ is \emph{even} if $\chi(-1)=1$ and \emph{odd} if $\chi(-1)=-1$.
\end{defi}
We are frequently only concerned with viewing Dirichlet characters as functions on $\Z $, and will consider two different characters $\chi_1$ mod $N_1$ and $\chi_2$ mod $N_2$ to be equivalent if they give rise to identical functions on $\Z $. This will happen when $\chi_1$ and $\chi_2$ both arise from the same primitive character $\chi$, and any prime $p$ that divides $N_1$ divides $N_2$, and vice versa.

Due to the number of different Dirichlet characters used in this paper, we require a standard way to represent them. Note that the group of Dirichlet characters modulo $N$ is isomorphic to $(\Z/N\Z)^\ast$ (\cite{washington97}, p.~22.), and so will be cyclic if $N$ is a power of an odd prime. For odd $q>3$ a prime or power of a prime, let $1<b<q$ be the smallest generator of the group $(\Z/q\Z)^\ast$. We define $\ch{q,1}$ to be the character modulo $q$ uniquely determined by \label{tagg1}
\[
\ch{q,1}(b)=e^{2 \pi i/\phi(q)},
\]
where $\phi$ is Euler's totient function. For $j=2,\ldots,q-2$, we define $\ch{q,j}:=\ch{q,1}^{\hspace{-3.5mm}j\hspace{2mm}}$. In addition, we define $\ch{3}$, $\ch{4}$ and $\ch{12}$ to be the unique primitive characters defined modulo $3$, $4$ and $12$, respectively. Finally, we define $\ch{8,1}$ to be the character modulo $8$ determined by $\ch{8,1}(5)=\ch{8,1}(7)=-1$, and $\ch{8,2}$ to be the character modulo $8$ determined by $\ch{8,2}(3)=\ch{8,2}(5)=-1$. 

The Hecke operators are a family of linear self-adjoint operators that diagonalize the space $\M_k(\G_1(N))$. For $n$ mod $N$ and $f\in \M_k(\G_1(N))$, the operator $\langle n \rangle$ is defined by
\[
\langle n \rangle f=f[\g]_k,
\]
where $\g=\m$ is any matrix in  $\G_0(N)$ such that $d\equiv n \Mod{N}$. Given a Dirichlet character $\chi$ mod $N$, we define the $\chi$-eigenspace of $\M_k(\G_1(N))$ to be
\[
\M_k(N,\chi):=\{f \in \M_k(\G_1(N)) : \langle n \rangle f=\chi(n)f \quad \forall n \in (\Z/N\Z)^\ast\}.
\]
When $f \in \M_k(N,\chi)$, $f$ is said to have \emph{Nebentypus} $\chi$. We have the orthogonal decomposition according to Nebentypus,
\[
\M_k(\G_1(N))=\bigoplus_\chi \M_k(N,\chi),
\]
where $\chi$ runs through all Dirichlet characters modulo $N$ such that $\chi(-1)=(-1)^k$.
Observe that $\M_k(N,\one{N})=\M_k(\G_0(N))$, and $\M_k(N,\chi)=\{0\}$ if $\chi(-1)\neq (-1)^k$. We define $\E_k(N,\chi)$ and $\S_k(N,\chi)$ analogously to $\M_k(N,\chi)$, and have the orthogonal decompositions
\[
\S_k(\G_1(N))=\bigoplus_\chi \S_k(N,\chi) \quad \textrm{and} \quad \E_k(\G_1(N))=\bigoplus_\chi \E_k(N,\chi).
\]
For $p$ prime, the Hecke operator $T_p$ can be defined by its action on the Fourier expansion of $f(z)=\sum_{n=0}^\i a_f(n)q^n \in \M_k(N,\chi)$. We have
\[
T_pf(z)=\sum_{n=0}^\i (a_f(n p)+\chi(p)p^{k-1}a_f(n/p))q^n.
\]
Note that here and elsewhere, $a_f(n/p)$ is taken to be zero if $p\Nmid n$. Further Hecke operators are defined in terms of $T_p$ and $\langle p \rangle$ by 
\[
T_{p^r}=T_p T_{p^{r-1}}-p^{k-1}\langle p \rangle T_{p^{r-2}} \quad \textrm{for} \quad r\geq 2
\]
and
\[
T_n=\prod_i T_{p_i^{e_i}} \quad \textrm{where} \quad n=\prod_i p_i^{e_i}.
\]
For $f(z) =\sum_{i=0}^\i a_f(i)q^i \in \M_k(N,\chi)$ and arbitrary $n$ we have
\begin{equation}
a_{T_n f}(m)=\sum_{d|(m,n)}\! \chi(d)d^{k-1}a_f(mn/d^2). \label{eqq1}
\end{equation}

\begin{defi}
We say that $f \in \M_k(\G_1(N))$ is an \emph{eigenform} if it is an eigenvector for every Hecke operator $T_n$ and $\langle n \rangle$. Note that we do not require $n$ to be relatively prime to the level $N$.
\end{defi}

\begin{defi} \label{deff1}
We say that a modular form $f(z)=\sum_{n=0}^\i a_f(n)q^n$ is \emph{normalized} if $a_f(1)=1$. Note that all Eisenstein series used will be normalized in this way as well, except where otherwise noted.
\end{defi}
If $f$ is an eigenform, then for every Hecke operator $T_n$ we have from \eqref{eqq1} that
\[
a_{T_n f}(1)=a_f(n)=\l_n a_f(1),
\]
for the eigenvalue $\l_n$. Thus if $f$ is not normalizable, i.e., if $a_f(1)=0$, then $f(z)=a_f(0)$. Therefore every non-constant eigenform is normalizable.

When \label{tagg2} the space $\S_k(\G_0(N))$ is one dimensional, we denote the unique normalized cuspidal eigenform by $\Delta_{k,N}$. When $\S_k(N,\chi)$ is one dimensional, we denote the unique normalized cuspidal eigenform by $\Delta_{k,N,\chi}$. 

Occasionally we will need to reference a specific eigenform in the space $\S_k(\G_0(N))$, when $\S_k(\G_0(N))$ is not one dimensional. In each such instance, there will be only one eigenform in the space $\S_k(\G_0(N))$ that we reference. We write $\Phi_{k,N}$ to denote this specified normalized eigenform, and provide a partial Fourier expansion to identify $\Phi_{k,N}$ uniquely. See page \pageref{tagg4} for a list of the ten $\Phi_{k,N}$ used, along with their corresponding expansions.

We define the Eisenstein series $E_k^{\.\psi,\. \vp}$ (normalized slightly differently than in \cite{diamondshurman05}, p.~129) by
\[
E_k^{\.\psi,\. \vp}(z):=\frac{1}{2}\delta(\psi)L(1-k,\vp)+\sum_{n=1}^\infty \sigma_{k-1}^{\psi,\vp}(n)q^n
\]
where $\delta(\psi)=1$ if $\psi=\one{1}$ and $0$ otherwise, and 
\[
\sigma_{k-1}^{\psi,\vp}(n)=\sum_{d|n}\psi(n/d)\vp(d)d^{k-1}.
\]
For $\psi$ and $\vp$ primitive characters modulo $u$ and $v$, respectively, satisfying $\psi \vp (-1)=(-1)^k$, we have $E_k^{\.\psi,\. \vp} \in \E_k(u v,\psi \vp)$ (except when $k=2$ and $\psi=\vp=\one{1}$). Moreover, for such characters, $E_k^{\.\psi,\. \vp}$ will be an eigenform (see \cite{diamondshurman05}, p.~173). 
Note that $E_k^{\.\psi,\. \vp}$ will be a modular form for certain non-primitive characters $\psi$ and $\vp$ as well. When $k=2$ and $p$ is prime we have the special Eisenstein series defined by
\[
E_2^{\one{1}, \one{1}}(z)-p E_2^{\one{1}, \one{1}}(pz)\in \E_2(p,\one{p})
\]
(see \cite{diamondshurman05} p.~133). Observe that this function is equal to $E_2^{\one{1}, \one{p}}(z)$. We will make use of $E_k^{\.\psi,\. \vp}$ with non-primitive characters $\psi$ and $\vp$ only when $E_k^{\.\psi,\. \vp}$ is a genuine eigenform. For example, $E_2^{\one{2}, \one{2}}(z)=E_2^{\one{1}, \one{2}}(z)-E_2^{\one{1}, \one{2}}(2z)$ is an eigenform in $\E_2(4,\one{4})$.

\begin{theo} [\cite{diamondshurman05}, p.~198] \label{thmm3}
A normalized modular form $f(z)=\sum_{n=0}^\i a_f(n) q^n \in \M_k(N,\chi)$ is an eigenform if and only if the Fourier coefficients $a_f(n)$ satisfy the following two conditions: 
\begin{align}
a_f(p^r)&=a_f(p)a_f(p^{r-1}) - \chi(p)p^{k-1}a_f(p^{r-2}) \quad  p \textrm{ prime}, r \geq 2, \label{eqq2}\\
a_f(mn)&=a_f(m)a_f(n)  \quad \textrm{if }  (m,n)=1. \label{eqq3}
\end{align}
\end{theo}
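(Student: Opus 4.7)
The plan is to exploit equation \eqref{eqq1} together with the multiplicative structure of the Hecke algebra: $T_n = \prod_i T_{p_i^{e_i}}$ for $n = \prod_i p_i^{e_i}$, and the recursion $T_{p^r} = T_p T_{p^{r-1}} - p^{k-1}\langle p\rangle T_{p^{r-2}}$. Since $f \in \M_k(N,\chi)$, the diamond operators satisfy $\langle n\rangle f = \chi(n) f$ automatically, so only the $T_n$-eigenvector condition is at issue.

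For the forward direction, suppose $T_n f = \lambda_n f$ for every $n$. Setting $m=1$ in \eqref{eqq1} and using $a_f(1) = 1$ gives $\lambda_n = a_f(n)$. Applying the prime-power recursion to $f$ and reading off the first Fourier coefficient yields \eqref{eqq2}; applying $T_{mn} = T_m T_n$ for coprime $m,n$ to $f$ and reading off the first Fourier coefficient yields \eqref{eqq3}.

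For the converse, assume \eqref{eqq2} and \eqref{eqq3}. The goal is to prove $T_n f = a_f(n) f$ for every $n \geq 1$. By \eqref{eqq1}, this is equivalent to
\[
\sum_{d\mid (m,n)} \chi(d)\, d^{k-1}\, a_f(mn/d^2) = a_f(n)\, a_f(m) \qquad \text{for every } m \geq 1,
\]
together with a compatibility check at $m = 0$ (automatic when $f$ is a cusp form, and forced in the Eisenstein case by the shape of $a_f(n)$ produced by \eqref{eqq2} from the base values). Writing out prime factorizations and using \eqref{eqq3} to factor both sides of the displayed identity prime by prime, it suffices to treat the case $m = p^a$, $n = p^b$ for a single prime $p$; the claim then becomes
\[
\sum_{i=0}^{\min(a,b)} \chi(p)^i\, p^{i(k-1)}\, a_f(p^{a+b-2i}) = a_f(p^a)\, a_f(p^b).
\]
Setting $c_r = a_f(p^r)$ and $C = \chi(p) p^{k-1}$, relation \eqref{eqq2} reads $c_r = c_1 c_{r-1} - C c_{r-2}$. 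I would prove the identity by induction on $\min(a,b)$: using $c_a = c_1 c_{a-1} - C c_{a-2}$, the product $c_a c_b$ becomes $c_1 c_{a-1} c_b - C c_{a-2} c_b$; one then applies the inductive hypothesis to each factor, expands $c_1 c_r$ inside the resulting sum via $c_1 c_r = c_{r+1} + C c_{r-1}$ (a direct consequence of the recursion, with the convention $c_{-1} = 0$), and telescopes to recover the full sum up to $i = \min(a,b)$.

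The main obstacle is precisely this prime-power induction: although elementary, the bookkeeping is delicate, since the telescoping must produce exactly the top-index term $C^{\min(a,b)}\, a_f(p^{|a-b|})$ that is needed to extend the inductive sum by one. Everything else, namely the reduction from general $(m,n)$ to the single-prime case and the extraction of \eqref{eqq2} and \eqref{eqq3} from the Hecke relations, is routine once equation \eqref{eqq1} is in hand.
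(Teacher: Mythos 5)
The paper offers no proof of Theorem \ref{thmm3} at all --- it is quoted from Diamond--Shurman --- so the only meaningful comparison is with the standard textbook argument, which is essentially the route you take. Your forward direction is fine: $\lambda_n=a_f(n)$ from the $m=1$ case of \eqref{eqq1}, after which \eqref{eqq2} and \eqref{eqq3} drop out of the operator identities $T_{p^r}=T_pT_{p^{r-1}}-p^{k-1}\langle p\rangle T_{p^{r-2}}$ and $T_{mn}=T_mT_n$. Your converse is also structurally sound: the reduction to a single prime works because $\chi(d)d^{k-1}$ is completely multiplicative and \eqref{eqq3} makes $a_f$ multiplicative, so the divisor sum in \eqref{eqq1} factors over primes; and the induction on $\min(a,b)$ with $c_1c_r=c_{r+1}+Cc_{r-1}$ does telescope to produce exactly the new top term $C^{\min(a,b)}a_f(p^{|a-b|})$, so the ``main obstacle'' you flag is genuinely only bookkeeping.

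The one real gap is your handling of the constant term. The claim that the $m=0$ compatibility is ``forced in the Eisenstein case by the shape of $a_f(n)$ produced by \eqref{eqq2}'' is not a proof: conditions \eqref{eqq2} and \eqref{eqq3} only relate prime-power coefficients to one another and to $a_f(p)$, and nothing in them pins $a_f(p)$ to the value $1+\chi(p)p^{k-1}$ that the constant term of $T_pf$ requires when $a_f(0)\neq 0$; trying to invoke the classification of constant-term-bearing eigenforms (Proposition \ref{propp1}) here would be circular, since that classification presupposes $f$ is an eigenform. The standard repair is cheap: once you know every positive-index Fourier coefficient of $T_nf-a_f(n)f$ vanishes, note that this difference is a constant function lying in $\M_k(N,\chi)$ with $k\geq 1$, and a nonzero constant is not modular of positive weight, so $T_nf=a_f(n)f$ with no separate $m=0$ check. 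With that one substitution your argument is complete.
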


\begin{theo} [\cite{diamondim95}, p.~115] \label{thmm4}
Let $f(z)=\sum_{n=0}^\i a_f(n)q^n$ be a normalized eigenform. Then the coefficients $a_f(n)$, except for possibly $a_f(0)$, are algebraic integers in some number field.
\end{theo}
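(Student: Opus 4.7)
The plan is to produce a Hecke-stable $\Z[\chi]$-lattice inside $\M_k(N,\chi)$ and then extract algebraic integrality of the Hecke eigenvalues by elementary linear algebra. Because $f$ is normalized, applying \eqref{eqq1} with $m=1$ gives $a_f(n) = \l_n$, where $\l_n$ is the eigenvalue of $T_n$ acting on $f$, so establishing that every $\l_n$ is an algebraic integer is exactly what is needed.

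First I would set up the integral structure. Let $R = \Z[\chi]$ denote the subring of $\C$ generated by the values of the Nebentypus character, and let $M(R) \subset \M_k(N,\chi)$ be the $R$-submodule of modular forms whose $q$-expansions at $\i$ lie in $R[\![q]\!]$. The crucial input is that $\M_k(N,\chi)$ admits a $\C$-basis of forms whose $q$-expansions at $\i$ are $R$-integral; this makes $M(R)$ a finitely generated $R$-module spanning $\M_k(N,\chi)$ over $\C$. One produces such a basis from the Eisenstein series $E_k^{\.\psi,\. \vp}$ (whose Fourier coefficients $\sigma_{k-1}^{\psi,\vp}(n)$ are character sums lying in $R$) together with cuspidal generators built from eta quotients, theta series, or other explicit constructions, appealing to the $q$-expansion principle for uniqueness.

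Next I would verify Hecke stability. Formula \eqref{eqq1} makes this immediate: since $\chi(d)d^{k-1} \in R$ for every $d$, each $T_n$ sends $M(R)$ to itself, and each diamond operator $\langle n \rangle$ acts by multiplication by $\chi(n) \in R$. Fixing an $R$-basis of $M(R)$, the operator $T_n$ is represented by a matrix with entries in $R$, so its characteristic polynomial is monic with coefficients in $R$. Since $R$ itself consists of algebraic integers, every eigenvalue of $T_n$---and in particular $\l_n = a_f(n)$---is an algebraic integer.

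To place all the $a_f(n)$ inside a single number field, let $\mathbb{T} \subset \mathrm{End}_R(M(R))$ be the commutative $R$-subalgebra generated by every $T_n$ and $\langle n \rangle$. Since $M(R)$ is a finitely generated $R$-module, so is $\mathbb{T}$, and the evaluation map $\mathbb{T} \to \C$ sending $T$ to its eigenvalue on $f$ is a ring homomorphism whose image is a finitely generated $R$-subalgebra of $\C$ consisting of algebraic integers. This image is contained in the ring of integers of a single number field, which then contains every $a_f(n)$ for $n \geq 1$. I expect the main obstacle to be the initial construction of a full-rank $R$-integral lattice: at composite level with nontrivial character, exhibiting enough explicit modular forms with $R$-integral $q$-expansions requires some care, and the cleanest route is probably to invoke the moduli-theoretic description of modular forms over $\Z[1/N,\chi]$, where the integral structure is built in from the outset.
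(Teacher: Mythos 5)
The paper offers no proof of this statement at all: it is quoted verbatim from Diamond--Im (p.~115) and used as a black box, so there is nothing internal to compare your argument against. Judged on its own terms, your proof is the standard one and is correct in outline. The reduction $a_f(n)=\lambda_n$ via \eqref{eqq1} with $m=1$ is right (and correctly explains why $a_f(0)$ is excluded, since the Eisenstein constant term $-B_{k,\chi}/2k$ need not be integral); the Hecke-stability of the lattice $M(R)$ follows immediately from \eqref{eqq1} since $\chi(d)d^{k-1}\in R$; and packaging everything into the finitely generated Hecke algebra $\mathbb{T}$ correctly yields a single number field containing all the $a_f(n)$, $n\geq 1$. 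You have also correctly isolated the one genuinely nontrivial input, namely that $\M_k(N,\chi)$ is spanned over $\C$ by forms with $q$-expansions in $R=\Z[\chi]$ (equivalently, that $\M_k(\G_1(N))$ has a basis with $q$-expansions in $\Z$, from which the $\chi$-eigenspace statement follows by projection). That input cannot be dispatched by naming a few Eisenstein series and eta quotients --- at general level and weight it requires either the geometric theory of modular forms over $\Z[1/N]$ together with the $q$-expansion principle, or (for cusp forms) the Eichler--Shimura relation with integral cohomology --- so your closing remark that the moduli-theoretic route is the clean one is exactly where the real content lies. One small point worth making explicit: finite generation of $M(R)$ over $R$ needs the observation that $M(R)$ embeds into a free $R$-module of finite rank (e.g.\ by truncating $q$-expansions below the Sturm bound \eqref{eqM}), after which Noetherianity of $R$ finishes the job.
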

We will make use of Sturm's theorem to verify potential eigenform identities.

\begin{theo} [\cite{sturm87}, p.~276]
Let $\mathfrak{m}$ be a prime ideal in the ring of integers $\O$ of some number field. Let $f,g \in \M_k(\G_1(N))$ have Fourier coefficients in $\O$. If 
\[
a_f(n)\equiv a_g(n) \Mod{\mathfrak{m}} \quad \textrm{for all} \quad n < M
\]
then $f \equiv g \Mod{\mathfrak{m}}$, where
\begin{equation}
M=1+\frac{k N^2}{12}\prod_{p\,|N}\! \left(1-\frac{1}{p^2}\right). \label{eqM}
\end{equation}
\end{theo}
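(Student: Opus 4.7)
The plan is to prove the contrapositive: assuming $h := f - g \in \M_k(\G_1(N))$ has coefficients in $\O$ with $a_h(n) \equiv 0 \Mod{\mathfrak{m}}$ for all $n < M$, I want to show $h \equiv 0 \Mod{\mathfrak{m}}$. Equivalently, if the reduction $\bar h \in (\O/\mathfrak{m})[[q]]$ is nonzero, then its order of vanishing at $q=0$ must be at most $M-1 = \frac{kN^2}{12}\prod_{p|N}(1-1/p^2)$, which would contradict the hypothesis.

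The key input is the classical valence formula. A nonzero $F \in \M_k(\G_1(N))$ over $\C$, viewed as a section of the weight-$k$ automorphic line bundle on the compactified modular curve $X_1(N)$, is a section of a line bundle whose degree is $\frac{k}{12}[\overline{\sl}:\overline{\G_1(N)}] = \frac{kN^2}{12}\prod_{p|N}(1-1/p^2)$. (One checks directly that $[\sl:\G_0(N)] \cdot \phi(N) = N\prod(1+1/p) \cdot N\prod(1-1/p) = N^2\prod(1-1/p^2)$, accounting for the $\G_0(N)/\G_1(N) \cong (\Z/N\Z)^\ast$ extension.) A nonzero section has at most $\deg$ zeros counted with multiplicity, so in particular $\ord_\i F \leq M-1$.

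The second step is to transfer this characteristic-zero bound to the residue field $\O/\mathfrak{m}$. The most conceptual way is to work with the arithmetic theory of modular forms over the DVR $\O_\mathfrak{m}$: $h$ gives a section of the weight-$k$ line bundle on a suitable integral model of $X_1(N)$, and its mod-$\mathfrak{m}$ reduction is a section of the corresponding line bundle on the special fiber, of the same degree $M-1$. If $\bar h\neq 0$, the bound $\ord_\i \bar h\leq M-1$ persists, furnishing some $n_0<M$ with $a_h(n_0)\notin\mathfrak{m}$ and contradicting the hypothesis.

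The main obstacle is making the transfer to characteristic $p$ rigorous when $p \mid 6N$, since then $X_1(N)$ may have bad reduction at $\mathfrak{m}$ and the naive degree argument breaks. Sturm's original workaround avoids the geometric machinery entirely: one multiplies $h$ by a carefully chosen modular form $E$ with $E \equiv 1 \Mod{\mathfrak{m}}$ (for instance an Eisenstein series, or the Hasse invariant in residue characteristic $p\geq 5$) in order to land in a space of larger weight where an echelon-basis / dimension-counting argument over $\O/\mathfrak{m}$ applies, while carefully tracking that the increase in weight does not worsen the bound beyond $M$. Making this multiplication trick produce exactly the stated constant is the genuinely delicate part of the argument.
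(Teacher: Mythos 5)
The paper offers no proof of this statement at all: it is quoted directly from Sturm \cite{sturm87} and used as a black box, so there is nothing internal to compare your argument against. Judged on its own, your write-up is an outline whose gap sits exactly where the content of the theorem lies. The characteristic-zero step you do carry out --- the valence formula bounding $\ord_\i F$ for a nonzero $F\in\M_k(\G_1(N))$ over $\C$ --- is the easy half and is not what is being asserted. The theorem is precisely the claim that this bound on the $q$-adic order of vanishing survives reduction modulo $\mathfrak{m}$, and for that you offer only two gestures. The first (reduce a section of the weight-$k$ bundle on an integral model) you yourself concede breaks when the residue characteristic divides $6N$, and this is not a removable technicality: at such primes the special fiber of the relevant model of $X_1(N)$ can be reducible, so a nonzero section of a degree-$d$ bundle may vanish to order greater than $d$ at the cusp $\i$ simply by dying on the component containing $\i$. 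The second (multiply by a form $E\equiv 1 \Mod{\mathfrak{m}}$ and run a dimension count in higher weight) is the actual proof, and you explicitly decline to execute it, calling it ``the genuinely delicate part.'' Deferring the delicate part is deferring the theorem; as written, no reader could reconstruct why the specific constant $M$ in \eqref{eqM} works.

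Two further points. First, your index bookkeeping conflates $[\sl:\G_1(N)]=N^2\prod_{p|N}(1-1/p^2)$ with the index of the images in $\mathrm{PSL}_2(\Z)$; for $N\geq 3$ these differ by a factor of $2$ because $-I\notin\G_1(N)$, so the sharp valence bound is actually half of $M-1$ and the stated $M$ is a safe overestimate --- harmless here, but the identification you assert is not literally correct. Second, if you want to complete the argument along elementary lines, the standard route (Sturm's own, essentially) is: normalize $h=f-g$ so that some Fourier coefficient is a unit at $\mathfrak{m}$, pass to level one by taking the product of $h[\g]_k$ over coset representatives of $\G_1(N)$ in $\sl$, verify that this norm form has coefficients in $\O$ (up to a controlled denominator) and $q$-order at $\i$ at least $\ord_\i h$, and then invoke the level-one case, which one proves by induction on the weight using the ring structure $\M_\ast(\sl)=\C[E_4^{\one{1},\one{1}},E_6^{\one{1},\one{1}}]$ and division by $\Delta_{12,1}$. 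Each of these steps requires a genuine verification (integrality of the conjugate expansions is where the work is), and none of them appears in your proposal.
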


\begin{coro} \label{corr1}
Let $f,g \in \M_k(\G_1(N))$ have algebraic integer coefficients. If 
\[
f(z)-g(z)=0+O(q^M),
\]
then $f=g$, where $M$ is given in \eqref{eqM}.
\end{coro}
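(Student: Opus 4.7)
The plan is to deduce this directly from Sturm's theorem by quantifying over all prime ideals. First I would set $h := f - g \in \M_k(\G_1(N))$. Since the coefficients of $f$ and $g$ are algebraic integers, they all lie in the ring of integers $\O$ of a common number field $K$ (take the compositum of the fields generated by the finitely many coefficients that appear up to the Sturm bound $M$, together with any field of definition one already has in mind; alternatively, since equality of holomorphic functions is being tested, it is enough to work in $\O := \Z[a_f(n), a_g(n) : n \geq 0]$ embedded into the integers of a suitable number field). The hypothesis gives $a_h(n) = 0$ for all $n < M$.

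Next, I would apply Sturm's theorem ideal-by-ideal. For any nonzero prime ideal $\mathfrak{m} \subset \O$, the vanishing $a_h(n) = 0$ for $n < M$ trivially implies $a_h(n) \equiv 0 \Mod{\mathfrak{m}}$ in that range, so Sturm's theorem yields $h \equiv 0 \Mod{\mathfrak{m}}$, that is, $a_h(n) \in \mathfrak{m}$ for every $n \geq 0$.

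The final step is to conclude $a_h(n) = 0$ for all $n$. This follows from the standard fact that in the ring of integers of a number field, the intersection of all nonzero prime ideals is $\{0\}$: any nonzero element $\alpha \in \O$ generates a nonzero ideal which, by unique factorization of ideals in the Dedekind domain $\O$, is divisible by only finitely many prime ideals, so $\alpha$ cannot lie in every nonzero prime. Hence each $a_h(n)$ vanishes, giving $h = 0$ and $f = g$.

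There is essentially no main obstacle here; the statement is a formal consequence of Sturm's theorem combined with the Dedekind-domain fact above. The only minor care needed is to ensure that one works in a single number field so that ``mod $\mathfrak{m}$'' makes uniform sense for all coefficients at once, which is handled by the compositum remark.
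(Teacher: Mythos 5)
Your argument is correct and is essentially the proof the paper leaves implicit: the corollary is deduced from Sturm's theorem by observing that the hypothesis gives $a_{f-g}(n)\equiv 0 \Mod{\mathfrak{m}}$ for $n<M$ for \emph{every} nonzero prime $\mathfrak{m}$ of $\O$, hence $f\equiv g \Mod{\mathfrak{m}}$ for all $\mathfrak{m}$, and the intersection of all nonzero primes of a ring of integers is zero. Your side remark about working in a single number field is the only point needing care, and it is handled by the finite-dimensionality of $\M_k(\G_1(N))$ (which has a basis with rational coefficients, so all Fourier coefficients of $f$ and $g$ lie in the field generated by finitely many of them).
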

Thus to verify a potential eigenform identity $f g=h$, it suffices to check that $f(z) g(z) -h(z)=0+O(q^M)$, where $M$ is given in \eqref{eqM}. This is a finite time computation, which we perform in Pari-gp.

\section{Properties of Generalized Bernoulli Numbers} \label{secc3}
The \emph{Bernoulli numbers} $B_n$ are defined by
\[
\frac{t}{e^t-1}=\sum_{n=0}^\i B_n \frac{t^n}{n!}.
\]
The von Staudt-Clausen theorem states that for positive even $k$ the denominator of $B_k$ is the product of all primes $p$ such that $(p-1)|k$. Note that for odd $k>1$ we have $B_k=0$, and that $B_1=-1/2$. 

\begin{defi} [\cite{diamondim95}, p.~44]
Given a Dirichlet character $\chi$ mod $N$, not necessarily primitive, the \emph{generalized Bernoulli numbers} $B_{n,\chi}$ are defined by 
\[
\sum_{a=1}^{N} \chi(a)\frac{t e^{at}}{e^{N t}-1}=\sum_{n=0}^\i B_{n,\chi} \frac{t^n}{n!}.
\]
Note that if $\chi =\one{1}$, then $B_{k,\chi}=B_k$ (except when $k=1$). 
\end{defi}
The generalized Bernoulli numbers $B_{k,\chi}$ arise in conjunction with modular forms due to the fact that 
\[
L(1-k,\chi)=-B_{k,\chi}/k \quad \textrm{for} \quad k \geq 1.
\]
Hence we have
\begin{equation}
E_k^{\.\psi,\. \vp}(z)=-\delta(\psi)\frac{B_{k,\vp}}{2k}+\sum_{n=1}^\infty \sigma_{k-1}^{\psi,\vp}(n)q^n. \label{eqq4}
\end{equation}
For primitive $\chi$ mod $N$ we have
\begin{equation}
B_{k,\chi}=\frac{1}{f_{\chi}}\sum_{a=1}^{f_{\chi}} \sum_{i=0}^k \binom{k}{i} \chi(a) B_i f_{\chi}^i a^{k-i}. \label{eqq5}
\end{equation}
For details see \cite{washington97} p.~32. Recall that $\chi$ mod $N$ is primitive if and only if $f_\chi=N$. For non-primitive $\chi$ mod $N$, the value of $B_{k,\chi}$ is given by
\begin{equation}
B_{k,\chi}=B_{k,\ch{0}} \!\prod_{p \, |N}\! \big(1-\ch{0}(p)p^{k-1}\big), \label{eqq6}
\end{equation}
where $\ch{0}$ is the primitive character associated to $\chi$ (see \cite{washington97}, p.~206). If $\chi$ is even and $k$ is odd, or vice versa, then $B_{k,\chi}=0$ (except when $k=1$ and $\chi=\one{N}$). Note that the converse is true for $k>1$. From \eqref{eqq6} we see that $|B_{k,\chi}|\geq |B_{k,\ch{0}}|$ when $k \geq 2$.

\begin{prop} \label{propp1}
Let $f(z) =\sum_{n=0}^\i a_f(n)q^n\in \M_k(N,\chi)$ be a normalized eigenform with $a_f(0)\neq 0$, where $\chi$ mod $N$ is not necessarily primitive. Then $f=E_k^{\one{1},\. \chi}$.
\end{prop}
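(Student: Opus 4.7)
Plan: The strategy is to extract the Fourier coefficients of $f$ directly from the Hecke eigenform condition by evaluating formula (3.1) at the constant term, exploiting that $\gcd(0,n)=n$.

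First I would use that $f$ is a normalized eigenform: since $a_f(1)=1$, the $T_n$-eigenvalue must be $a_f(n)$, so $T_n f = a_f(n) f$ and therefore $a_{T_n f}(0) = a_f(n) a_f(0)$. On the other hand, from (3.1) applied at $m=0$,
\[
a_{T_n f}(0) = \sum_{d \mid n} \chi(d) d^{k-1} a_f(0) = a_f(0)\cdot \sigma_{k-1}^{\one{1},\chi}(n).
\]
Since $a_f(0)\neq 0$ by hypothesis, cancelling yields
\[
a_f(n) = \sigma_{k-1}^{\one{1},\chi}(n) \quad \textrm{for every } n \geq 1.
\]
These are precisely the positive-index Fourier coefficients of $E_k^{\one{1},\chi}$ recorded in (3.4).

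Next I would appeal to the theory of Eisenstein series to conclude that $E_k^{\one{1},\chi}$ itself lies in $\M_k(N,\chi)$. For primitive $\chi$ this is stated right after (3.4); for non-primitive $\chi$ one rewrites $E_k^{\one{1},\chi}$ as an appropriate linear combination of Eisenstein series built from the primitive character $\chi_0$ inducing $\chi$, following the examples in the excerpt such as $E_2^{\one{1},\one{p}}$ and $E_2^{\one{2},\one{2}}$. Granted that, $f - E_k^{\one{1},\chi}\in \M_k(N,\chi)$ has Fourier expansion consisting of only a constant term, and the only weight-$k$ modular form that is constant (for $k\geq 1$) is $0$. Hence $f = E_k^{\one{1},\chi}$, and in particular the constant terms agree, so $a_f(0) = -B_{k,\chi}/(2k)$.

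The main obstacle is the bookkeeping needed to justify that $E_k^{\one{1},\chi}$ is a genuine element of $\M_k(N,\chi)$ when $\chi$ is not primitive; this requires compatibility of the $\sigma_{k-1}^{\one{1},\chi}$ and $B_{k,\chi}$ expressions under induction from the primitive character, which is exactly the content of (3.6). The Hecke-operator identity producing all positive Fourier coefficients of $f$ is a one-line computation, so the proof is essentially immediate once this Eisenstein-series identification is in place.
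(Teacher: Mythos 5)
Your proposal follows essentially the same route as the paper, with one genuinely nicer ingredient and one substantive step left unexecuted. The opening computation --- evaluating \eqref{eqq1} at $m=0$, using $(0,n)=n$ and $T_nf=a_f(n)f$, then cancelling $a_f(0)\neq 0$ to get $a_f(n)=\sigma_{k-1}^{\one{1},\chi}(n)$ for all $n\geq 1$ --- is a clean, self-contained derivation of a fact the paper simply cites from the literature, and it immediately gives uniqueness of a normalized eigenform with non-zero constant term in $\M_k(N,\chi)$. After that the two arguments coincide: everything reduces to showing that $E_k^{\one{1},\. \chi}$ really is a modular form in $\M_k(N,\chi)$ when $\chi$ is non-primitive, and that is exactly where the paper does its real work. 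It writes down the M\"obius-twisted combination $h(z)=\sum_{d|N_1}\mu(d)\ch{0}(d)d^{k-1}E_k^{\one{1},\. \ch{0}}(dz)$, verifies by a Dirichlet-convolution argument that $a_h(n)=\sum_{d|n}\chi(d)d^{k-1}$ for $n>0$, matches the constant term via \eqref{eqq6}, and separately handles the edge case $k=2$, $\chi=\one{N}$, where $E_2^{\one{1}, \one{1}}$ is not itself a modular form and the sum must be regrouped (the paper's footnote). You correctly identify all of this as the remaining content and point at \eqref{eqq6}, but calling it ``bookkeeping'' undersells it: the verification that the twisted sum has the right positive coefficients, and the weight-$2$ trivial-character case, are the only non-formal parts of the proposition and would need to be written out. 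Your closing step (the difference $f-E_k^{\one{1},\. \chi}$ is a constant modular form of weight $k\geq 1$, hence zero) is fine and slightly more direct than the paper's appeal to uniqueness of the normalized eigenform with non-zero constant term.
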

\begin{proof}
If an eigenform $f\in \M_k(N,\chi)$ with $a_f(0) \neq 0$ is normalized so that $a_f(1)=1$, then
\[
a_f(n)=\sum_{d|n} \chi(d) d^{k-1}\quad \textrm{for} \quad n>0
\]
(see \cite{diamondim95}, p.~54). This implies that there is \emph{at most} one normalized eigenform with non-zero constant term in $\M_k(N,\chi)$. If $\chi$ is primitive then $E_k^{\one{1},\. \chi}$ is a modular form (and an eigenform), and thus $f=E_k^{\one{1},\. \chi}$. So we consider a non-primitive $\chi$ mod $N$. Let $\ch{0}$ be the primitive character associated to $\chi$ and let $N_1$ be the product of all primes dividing $N$ but not $f_\chi$. Consider the eigenform%
\footnote{If $k=2$ and $\chi=\one{N}$ then $g=E_2^{\one{1}, \one{1}}$ is not a modular form, but the linear combination given in \eqref{eqq7} nevertheless is. To see this let $p$ be a prime dividing $N_1$. Observe that
\[
\sum_{d|N_1}\mu(d)d^{k-1}E_2^{\one{1}, \one{1}}(dz)=\sum_{d|\frac{N_1}{p}}\mu(d)d^{k-1}E_2^{\one{1}, \one{p}}(dz),
\]
and note that $E_2^{\one{1}, \one{p}}(z)=E_2^{\one{1}, \one{1}}(z)-p E_2^{\one{1}, \one{1}}(pz)$ is a modular form.}
$g=E_k^{\one{1},\. \ch{0}} \in \E_k(f_\chi,\ch{0})$ (hence $a_g(n)=\sum_{d|n} \ch{0}(d) d^{k-1}$ for $n>0$). Now consider the modular form
\begin{equation}
h(z)=\sum_{d|N_1} \mu(d)\ch{0}(d)d^{k-1}g(dz) \label{eqq7}
\end{equation}
where $\mu$ is the M\"obius function. Observe that $h \in \E_k(N_1 f_\chi, \chi^\p)$, where $\chi^\p$ is $\chi$ viewed modulo $f_\chi N_1$ (note that $\chi^\p$ and $\chi$ are identical as functions on $\Z$). We have
\[
a_h(n)=\sum_{d|N_1} \mu(d)\ch{0}(d) d^{k-1}a_g(n/d).
\]
From this we see that if $(n,N_1)=1$ then $a_h(n)=a_g(n)$. We will show that $h$ is an eigenform by showing the coefficients $a_h(n)$ satisfy the conditions of Theorem \ref{thmm3}. We take $n>0$ and write $n=m M$, where $m$ is the largest divisor of $n$ relatively prime to $N_1$. Thus $(n,N_1)=1$, and $p|M \Rightarrow p|N_1$. We first show that $a_h(m M)=a_g(m)$. We have 
\begin{align*}
a_h(m M)&=\sum_{d|N_1} \mu(d)\ch{0}(d) d^{k-1}a_g\!\left(\frac{m M}{d}\right)\\
&=a_g(m) \sum_{d|M} \mu(d)\ch{0}(d) d^{k-1}a_g\!\left(\frac{ M}{d}\right)
\end{align*}
since $(m,d)=1$ when $d|N_1$ (here we use the multiplicative properties of $a_g(n)$ given in Theorem \ref{thmm3}). We want to show that the arithmetic function
\[
\rho(M):=\sum_{d|M} \mu(d)\ch{0}(d) d^{k-1}a_g(M/d)
\]
is identically $\one{}$. The function $\nu(n)=\mu(n)\ch{0}(n) n^{k-1}$ is multiplicative, as well as $a_g(n)$. Observe that $\rho$ is the Dirichlet convolution of $\nu$ and $a_g$, and is therefore also multiplicative. Thus to show $\rho=\one{}$, we need only observe that $\rho(p^l)=1$ for any $p$ prime and $l\geq 1$. We have
\begin{align*}
\rho(p^l)&=a_g(p^l)-\ch{0}(p)p^{k-1}a_g(p^{l-1})\\
&= \sum_{i=0}^l \ch{0}(p^i)p^{i(k-1)} - \ch{0}(p)p^{k-1} \sum_{i=0}^{l-1} \ch{0}(p^i)p^{i(k-1)}=1.
\end{align*}
Therefore we have $a_h(m M) = a_g(m)$. From this we see that the coefficients $a_h(n)$ satisfy both conditions of Theorem \ref{thmm3}. Thus $h$ is an eigenform. Since $h\in \M_k(f_\chi N_1,\chi^\p)$ is an eigenform with non-zero constant term, we have $a_h(n)=\sum_{d|n}\chi^\p(d)d^{k-1}=\sum_{d|n}\chi(d)d^{k-1}$, when $n>0$. The value of $a_h(0)$ is given by 
\[
-\frac{B_{k,\ch{0}}}{2k}\sum_{d|N_1} \mu(d)\ch{0}(d)d^{k-1}=-\frac{B_{k,\ch{0}}}{2k}\prod_{p\,|N_1} \!  \big(1-\ch{0}(p)p^{k-1}\big)=-\frac{B_{k,\chi}}{2k}.
\]
Therefore $h=E_k^{\one{1},\. \chi}$.
\end{proof}

\begin{prop}
Consider a non-trivial character $\chi$ mod $N$ of order $m$ \lp{}i.e., $\chi^m=\one{N}$\rp{}. If we let $K$ be the number field generated by the values of $\chi(a)$, then $K = \Q[\z{m}]$, where $\z{m}=e^{2 \pi i/m}$. Therefore $B_{k,\chi} \in \Q[\z{m}]$. More specifically, we have that $f_\chi B_{k,\chi}/k$ is an algebraic integer \lp{}see \cite{carlitz59}, p.~176\rp{}, i.e.,
\begin{equation}
f_\chi  B_{k,\chi}/k \in \Z[\z{m}]. \label{eqq8}
\end{equation}
For even $\chi$ with $f_\chi$ odd we have \lp{}see \cite{carlitz59}, p.~182\rp{}
\begin{equation}
B_{k,\chi}/k \equiv 0 \Mod{2}. \label{eqq9}
\end{equation}
\end{prop}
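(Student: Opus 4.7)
The plan is to prove the three assertions in sequence: first identify the number field $K$ as $\Q[\z{m}]$, then deduce $B_{k,\chi}\in \Q[\z{m}]$ from the explicit Bernoulli formulas \eqref{eqq5} and \eqref{eqq6}, and finally invoke Carlitz for the integrality refinement $f_\chi B_{k,\chi}/k \in \Z[\z{m}]$ and the parity statement for even $\chi$ with odd conductor.

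For the identification $K=\Q[\z{m}]$, I would establish two containments. The hypothesis $\chi^m=\one{N}$ forces $\chi(a)^m=1$ whenever $(a,N)=1$, so every nonzero character value is an $m$-th root of unity; this gives $K\subseteq \Q[\z{m}]$. Conversely, since the order of $\chi$ is \emph{exactly} $m$, there must exist some $a$ with $(a,N)=1$ for which $\chi(a)$ is a primitive $m$-th root of unity: otherwise every value of $\chi$ would be a $d$-th root of unity for some proper divisor $d$ of $m$, forcing $\chi^d=\one{N}$ and contradicting the assumed order. For such $a$ we have $\Q[\chi(a)]=\Q[\z{m}]$, which yields the reverse inclusion.

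For the containment $B_{k,\chi}\in \Q[\z{m}]$, I would split into two cases. When $\chi$ is primitive, formula \eqref{eqq5} writes $B_{k,\chi}$ explicitly as a $\Q$-linear combination of the values $\chi(a)$, each of which lies in $K=\Q[\z{m}]$, so the conclusion is immediate. When $\chi$ is non-primitive, let $\ch{0}$ denote the associated primitive character; since $\chi$ and $\ch{0}$ agree on integers coprime to $N$ and every residue class coprime to $f_\chi$ lifts via the Chinese remainder theorem to one coprime to $N$, the two characters share the same order $m$. The primitive case then yields $B_{k,\ch{0}}\in \Q[\z{m}]$, and formula \eqref{eqq6} exhibits $B_{k,\chi}$ as $B_{k,\ch{0}}$ times a product of terms of the form $1-\ch{0}(p)p^{k-1}$, which also lie in $\Q[\z{m}]$.

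The remaining claims, namely $f_\chi B_{k,\chi}/k\in\Z[\z{m}]$ and the congruence $B_{k,\chi}/k\equiv 0\Mod{2}$ for even $\chi$ with odd $f_\chi$, are not proved from scratch but simply cited to Carlitz, exactly as indicated in the statement. No serious obstacle arises in this proposition; the only mild subtlety is verifying that $\chi$ and its associated primitive character really do have the same order $m$, which reduces to the standard CRT lifting argument just mentioned.
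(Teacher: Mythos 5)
The paper states this proposition without any proof, relying on the citations to Carlitz for the integrality claim \eqref{eqq8} and the congruence \eqref{eqq9}; your writeup takes the same stance, deferring to Carlitz for exactly those two claims. The elementary parts you do supply --- the identification of $K$ via the cyclic image of $\chi$, and the membership of $B_{k,\chi}$ in that field via \eqref{eqq5} and \eqref{eqq6} together with the observation that $\chi$ and its associated primitive character have the same order --- are correct and are precisely the routine verifications the paper leaves implicit.
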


\begin{prop} \label{lemm1}
Let $k \geq 2$ and $\chi$ mod $f_\chi$ be a primitive Dirichlet character such that $\chi(-1)=(-1)^k$. We have 
\[
2\frac{\zeta(2k)}{\zeta(k)} k!(2 \pi)^{-k} f_{\chi}^{k-1/2} \leq \left|B_{k,\chi}\right| \leq 2\zeta(k) k!(2 \pi)^{-k} f_{\chi}^{k-1/2}.
\]
\end{prop}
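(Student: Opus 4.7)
The plan is to express $|B_{k,\chi}|$ in terms of $|L(k,\bar\chi)|$ via the classical closed-form evaluation of Dirichlet $L$-functions at positive integers (equivalently, the functional equation combined with the identity $L(1-k,\chi)=-B_{k,\chi}/k$ already used in the paper). For primitive $\chi$ satisfying $\chi(-1)=(-1)^k$, the standard formula reads
\[
L(k,\bar\chi) = \frac{(-1)^{k-1}(2\pi)^k}{2\cdot k!\cdot f_\chi^{k}}\, i^{\delta}\, \tau(\chi)\, B_{k,\chi},
\]
with $\delta\in\{0,1\}$ determined by the parity of $\chi$. Taking absolute values and using $|\tau(\chi)|=\sqrt{f_\chi}$ (valid precisely because $\chi$ is primitive) rearranges to
\[
|B_{k,\chi}| = \frac{2\cdot k!\cdot f_\chi^{k-1/2}}{(2\pi)^k}\, |L(k,\bar\chi)|.
\]
The proposition therefore reduces to showing $\zeta(2k)/\zeta(k) \leq |L(k,\bar\chi)| \leq \zeta(k)$.

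For the upper bound, I would apply the triangle inequality to the absolutely convergent series $L(k,\bar\chi)=\sum_{n\geq 1}\bar\chi(n)n^{-k}$ and use $|\bar\chi(n)|\leq 1$ to get $|L(k,\bar\chi)|\leq\zeta(k)$. For the lower bound, I would pass to the Euler product and use the inequality $|1-\bar\chi(p)p^{-k}|\leq 1+p^{-k}$ factor by factor, obtaining
\[
|L(k,\bar\chi)|^{-1} = \prod_p |1-\bar\chi(p)p^{-k}| \leq \prod_p \bigl(1+p^{-k}\bigr) = \prod_p\frac{1-p^{-2k}}{1-p^{-k}} = \frac{\zeta(k)}{\zeta(2k)},
\]
so that $|L(k,\bar\chi)|\geq\zeta(2k)/\zeta(k)$. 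Substituting these two bounds into the displayed expression for $|B_{k,\chi}|$ produces the stated inequalities.

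The only real subtlety is verifying the closed-form identity for $L(k,\bar\chi)$ in terms of $B_{k,\chi}$: this is exactly where the parity hypothesis $\chi(-1)=(-1)^k$ is used, since otherwise the relevant trigonometric factor arising in the functional equation would vanish and $L(k,\bar\chi)$ would have a completely different expression (and indeed $B_{k,\chi}$ itself would be $0$). Once that identity is in hand, the remainder of the argument is entirely elementary and relies only on the absolute convergence of the Dirichlet series and the Euler product at $s=k\geq 2$; no further analytic input is needed.
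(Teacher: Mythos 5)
Your argument is correct and follows essentially the same route as the paper: both derive $|B_{k,\chi}| = 2\,k!\,(2\pi)^{-k} f_\chi^{k-1/2}\,|L(k,\overline{\chi})|$ from the functional equation together with $|\tau(\chi)|=\sqrt{f_\chi}$ and $L(1-k,\chi)=-B_{k,\chi}/k$, and then bound $|L(k,\overline{\chi})|$ between $\zeta(2k)/\zeta(k)$ and $\zeta(k)$ using the Euler product. The only cosmetic difference is that you obtain the upper bound by the triangle inequality on the Dirichlet series, whereas the paper uses the factorwise bound $|1-\overline{\chi}(p)p^{-k}|^{-1}\leq(1-p^{-k})^{-1}$ in the Euler product; these are interchangeable.
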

\begin{proof}
The functional equation of $L(s,\chi)$ can be written as
\begin{equation}
L(1-s,\chi)=\frac{\G(s)2i^\delta}{\tau(\overline{\chi})}\cos\!\left(\frac{\pi(s-\delta)}{2}\right)\!\left(\frac{f_\chi}{2\pi}\right)^{\!s}\!L(s,\overline{\chi}), \label{eqq10}
\end{equation}
(see \cite{washington97}, p.~30) where $\delta=0$ if $\chi(-1)=1$, $\delta=1$ if $\chi(-1)=-1$, and $\tau(\chi)=\sum_{a=1}^{f_\chi}\chi(a)e^{2 \pi i a/f_{\chi}}$ is the Gauss sum of $\chi$. We make use of the fact that 
\[
|\tau(\chi)|=\sqrt{f_\chi},
\]
\[
L(s,\chi)=\prod_p(1-\chi(p)p^{-s})^{-1} \quad \textrm{for} \quad \re(s)>1
\]
and
\[
L(1-k,\chi)=-B_{k,\chi}/k \quad \textrm{for} \quad k \geq 1.
\]
We set $s=k$ in \eqref{eqq10} and take the absolute value to obtain
\[
|B_{k,\chi}|=2k!(2 \pi)^{-k} f_{\chi}^{k-1/2} \prod_p  |1-\overline{\chi}(p)p^{-k}|^{-1}.
\]
We bound the term $|1-\overline{\chi}(p)p^{-k}|^{-1}$ using
\[
(1+p^{-k})^{-1} \leq |1-\overline{\chi}(p)p^{-k}|^{-1} \leq (1-p^{-k})^{-1},
\]
and observe that 
\[
(1+p^{-k})^{-1}=\frac{(1-p^{-2k})^{-1}}{(1-p^{-k})^{-1}}.
\]
We use the Euler product formula for the Riemann zeta function,
\[
\zeta(s)=\prod_p(1-p^{-s})^{-1} \quad \textrm{for} \quad \re(s)>1,
\]
to obtain lower and upper bounds for the entire product. We have
\[
\zeta(2k)/\zeta(k) \leq \prod_p |1-\overline{\chi}(p)p^{-k}|^{-1} \leq \zeta(k).
\]
The inequalities follow directly from this.
\end{proof}

\begin{prop} \label{propp2}
There are only 41 pairs $(k,\chi)$ with $k \geq 2$ such that $2k/B_{k,\chi}$ is an algebraic integer \lp{}$\chi$ not necessarily primitive\rp{}. A complete list along with the corresponding integral values of $2k/B_{k,\chi}$ is given in Tables \ref{table1}, \ref{table2} and \ref{table3}. 
\end{prop}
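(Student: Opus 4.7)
The plan is to combine the lower bound on $|B_{k,\chi}|$ from Proposition \ref{lemm1} with a norm argument to cut the problem down to a finite search, then verify the full list by direct computation. Let $m=\ord(\chi)$, so $B_{k,\chi}\in\Q[\z{m}]$ by \eqref{eqq8}. The action of $\Gal(\Q[\z{m}]/\Q)$ sends $B_{k,\chi}$ to $B_{k,\chi^j}$ for $(j,m)=1$, and since raising to a $j$-th power coprime to $m$ preserves the kernel of $\chi$, each conjugate $\chi^j$ has the same conductor $f_\chi$ as $\chi$. Applying Proposition \ref{lemm1} to the primitive characters $\chi_0^j$, together with $|B_{k,\chi^j}|\ge|B_{k,\chi_0^j}|$ from \eqref{eqq6}, gives a uniform lower bound on every conjugate. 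If $2k/B_{k,\chi}$ is a nonzero algebraic integer, then $|N_{\Q[\z{m}]/\Q}(2k/B_{k,\chi})|\ge 1$, which after taking $\phi(m)$-th roots becomes the necessary condition
\[
\frac{\zeta(2k)}{\zeta(k)}\,k!\,(2\pi)^{-k}f_\chi^{k-1/2}\le k. \qquad (\ast)
\]

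Because $k!/(2\pi)^k$ grows super-polynomially, $(\ast)$ bounds $f_\chi$ in terms of $k$ and, conversely, bounds $k$ for each fixed $f_\chi$, so only finitely many pairs $(k,f_\chi)$ are eligible. For each such pair I would enumerate the primitive characters $\chi_0$ of conductor $f_\chi$ with $\chi_0(-1)=(-1)^k$. To handle non-primitive $\chi$ modulo $N$, recall that by \eqref{eqq6} each prime $p\mid N$ with $p\nmid f_\chi$ multiplies $B_{k,\chi_0}$ by $1-\chi_0(p)p^{k-1}$, of absolute value at least $p^{k-1}-1$; the same norm bound therefore restricts both which primes may be adjoined and how many. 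Modulo the equivalence of Section \ref{secc2}, this leaves a finite list of candidate pairs $(k,\chi)$.

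For each candidate I would compute $B_{k,\chi}$ exactly from \eqref{eqq5} and \eqref{eqq6} and test whether $2k/B_{k,\chi}$ is an algebraic integer. By \eqref{eqq8} we have $f_\chi B_{k,\chi}/k\in\Z[\z{m}]$, so $2k/B_{k,\chi}=2f_\chi/(f_\chi B_{k,\chi}/k)$, and the question reduces to checking whether $f_\chi B_{k,\chi}/k$ divides $2f_\chi$ in $\Z[\z{m}]$, a finite computation performed in Pari-gp. The expected main obstacle is the enumeration itself: for small $k$ (especially $k=2$, where $(\ast)$ still allows $f_\chi$ of order $15$) a moderate number of conductors and characters must be examined, and the non-primitive extensions must be tracked carefully to avoid double counting or omission under the equivalence of Section \ref{secc2}. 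The surviving candidates give the 41 pairs recorded in Tables \ref{table1}, \ref{table2}, and \ref{table3}.
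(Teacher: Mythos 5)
Your proposal is correct and follows essentially the same route as the paper: a Galois-conjugate/norm argument combined with the lower bound of Proposition \ref{lemm1} (your condition $(\ast)$ is exactly the paper's criterion $C(k,f_\chi)\le 1$) to reduce to finitely many pairs $(k,f_\chi)$, the Euler-factor formula \eqref{eqq6} to control non-primitive characters, and a direct Pari-gp verification of the survivors. The paper merely organizes the same computation into four explicit cases ($\one{1}$, $\one{N}$, primitive non-trivial, non-primitive non-trivial), handling $k=2$ with $\chi=\one{N}$ via the identity $4/B_{2,\one{N}}=24/\prod_{p\mid N}(1-p)$.
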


\begin{rmk} \label{remm1}
Note that here we are viewing $\chi$ mod $N$ as a function on $\Z$, rather than a function on $(\Z/N\Z)^\ast$. So if the characters $\chi$ and $\chi^\p$ define the same function on $\Z$, then we consider the pairs $(k,\chi)$ and $(k,\chi^\p)$ to be the same. For example, $\ch{3}$ and $\ch{9,3}$ are the same function on $\Z$, but have different moduli. The pairs $(k,\ch{3})$ and $(k,\ch{9,3})$ are fundamentally the same, and give rise to the same value of $2k/B_{k,\chi}$.
\end{rmk}
\begin{proof}
We first consider the case where $\chi=\one{1}$. Recall that for $k>1$ we have $B_{k,\one{1}}=B_k$. It is known that $2k/B_k$ is an integer only when $k\in \{1,2,4,6,8,10,14\}$. This can be seen from the upper bound on $2k/B_{k,\one{1}}$ implied by Proposition \ref{lemm1}. The six $k\geq 2$ such that $2k/B_{k,\one{1}}$ is integral yield the values contained in Table \ref{table1}.
\begin{table} [ht]
\caption{} \label{table1}
\begin{tabular}{|c|c|c|}
\hline
\ra $\frac{4}{B_{2,\one{1}}}=24$ & $\frac{8}{B_{4,\one{1}}}=-240$ & $\frac{12}{B_{6,\one{1}}}=504$\\
\hline
\ra $\frac{16}{B_{8,\one{1}}}=-480$ & $\frac{20}{B_{10,\one{1}}}=264$ & $\frac{28}{B_{14,\one{1}}}=24$\\
\hline
\end{tabular}
\end{table}

We now consider the case where $\chi=\one{N}$. From $\eqref{eqq6}$ we have
\[
\frac{2k}{B_{k,\one{N}}} =\frac{2k}{B_{k,\one{1}}} \prod_{p\,|N} \! \left(1-\one{1}(p)p^{k-1}\right)^{-1}.
\]
If $2k/B_{k,\one{N}}$ is integral then $2k/B_{k,\one{1}}$ must also be integral, and so we have $k\in\{2,4,6,8,10,14\}$. The function $\one{N}$ depends only on the primes dividing $N$, so without loss of generality we take $N$ to be square-free (see Remark \ref{remm1}). An inspection shows that if $N>1$ and $k\in \{4,6,8,10,14\}$, $2k/B_{k,\one{N}}$ will not be an integer. To see this observe that if $p\geq 11$, then $|1-p^{k-1}|>504$ for $k\geq 4$. Checking the finite number of pairs $(k,\one{N})$ not covered we see that $2k/B_{k,\one{N}}$ is never integral. So we restrict ourselves to the case $k=2$. We have
\[
\frac{4}{B_{2,\one{N}}}=\frac{24}{\prod_{p|N}(1-p)}.
\]
There are a total of 17 (square-free) values of $N>1$ such that $4/B_{2,\one{N}}$ is an integer. The values are given in Table \ref{table2}.
\begin{table} [ht]
\caption{} \label{table2}
\begin{tabular}{|c|c|c|c|c|}
\hline
\ra  $\frac{4}{B_{2,\one{2}} }=-24$ & $\frac{4}{B_{2,\one{3}}}=-12$ & $\frac{4}{B_{2,\one{5}}}=-6$ & $\frac{4}{B_{2,\one{6}}}=12$ & $\frac{4}{B_{2,\one{7}}}=-4$ \\
\hline
\ra $\frac{4}{B_{2,\one{10}}}=6$ & $\frac{4}{B_{2,\one{13}}}=-2$ & $\frac{4}{B_{2,\one{14}}}=4$ & $\frac{4}{B_{2,\one{15}}}=3$ & $\frac{4}{B_{2,\one{21}}}=2$ \\
\hline
\ra $\frac{4}{B_{2,\one{26}}}=2$ & $\frac{4}{B_{2,\one{30}}}=-3$ & $\frac{4}{B_{2,\one{35}}}=1$ & $\frac{4}{B_{2,\one{39}}}=1$ & $\frac{4}{B_{2,\one{42}}}=-2$ \\
\hline
\ra $\frac{4}{B_{2,\one{70}}}=-1$ & $\frac{4}{B_{2,\one{78}}}=-1$ & & & \\
\hline
\end{tabular}
\end{table}

We now consider the case where $\chi$ mod $f_\chi$ is a primitive non-trivial character. For any $\sigma \in \Gal(\Qbar/\Q)$, we define $\chi^\sigma$ to be the character modulo $f_\chi$ determined by
\[
\chi^\sigma(a)=\sigma\left(\chi(a)\right).
\]
Note that $\chi^\sigma$ will be primitive if $\chi$ is primitive. From the representation of $B_{k,\chi}$ given in \eqref{eqq5} we see that
\[
\sigma\left({B_{k,\chi}}\right)={B_{k,\chi^\sigma}}.
\]
If $2k/B_{k,\chi}$ is an algebraic integer, then so is $\sigma(2k/B_{k,\chi})=2k/B_{k,\chi^\sigma}\!$ for every $\sigma \in \Gal(\Qbar/\Q)$.
Given an algebraic integer $\b$, if the absolute value of every Galois conjugate of $\b$ is less than $1$, then $\mathcal{N}(\b)< 1$, and thus $\b=0$ (here $\mathcal{N}$ denotes the absolute norm). Therefore, if $2k/B_{k,\chi}$ is integral, then there must exist a $\sigma \in \Gal(\Qbar/\Q)$ such that $|\sigma(2k/B_{k,\chi})|=|2k/B_{k,\chi^\sigma}|\! \geq 1$.

For positive integers $k$ and $f_\chi$, we define
\[
C(k ,f_\chi ):=\frac{\zeta(2k)(k-1)!f_\chi^{k-1/2}}{\zeta (k)(2\pi)^k}. \label{tagg3}
\]
From Proposition $\ref{lemm1}$, if $\chi$ is primitive and of the same parity as $k>1$, we have  
\[
C(k , f_\chi ) \leq \left| \frac{B_{k , \chi}}{2k}\right|.
\]
Note that for fixed $k$, $C(k,f_\chi)$ is strictly increasing as a function of $f_\chi $. We are interested in finding pairs $(k ,f_\chi )$ such that $C(k ,f_\chi )>1$. Given such a pair, we see that $2k/B_{k,\chi}$ will not be integral for primitive $\chi$. We have $C(k,f_\chi) >1$ when $f_\chi \geq 3$ and $k \geq 7$. Note that if $\chi$ is non-trivial, then $f_\chi \geq 3$. Thus if $k \geq 7$ and $\chi$ is non-trivial, $2k/B_{k,\chi}$ will not be an algebraic integer. 

We also have $C(k,f_\chi) >1$ when $f_\chi \geq 16$ and $2 \leq k \leq 6$. Thus there is a finite list of weights $k$ ($2 \leq k\leq 6$) and characters $\chi$ mod $f_\chi$ ($f_\chi \leq 16)$ to check where $2k/B_{k,\chi}$ could potentially be an algebraic integer. We use Pari-gp to perform the computations of the values of $2k/B_{k,\chi}$. We find a total of 18 integral values, given in Table \ref{table3}. Note that $\z{6}=e^{\pi i/3}$.
\begin{table} [ht]
\caption{} \label{table3}
\begin{tabular}{|c|c|c|}
\hline
\ra $\frac{4}{B_{2,\ch{5,2}}}=5$ & $\frac{4}{B_{2,\ch{7,2}}}=\z{6}+2$ & $\frac{4}{B_{2,\ch{7,4}}}=-\z{6}+3$\\
\hline
\ra $\frac{4}{B_{2,\ch{8,2}}}=2$ & $\frac{4}{B_{2,\ch{9,2}}}=-\z{6}+2$ & $\frac{4}{B_{2,\ch{9,4}}}=\z{6}+1$\\
\hline
\ra $\frac{4}{B_{2,\ch{12}}}=1$ & $\frac{4}{B_{2,\ch{13,4}}}= -\z{6}^{\!\!2}$ & $\frac{4}{B_{2,\ch{13,8}}}=\z{6}^{\!\!2}+1$\\
\hline
\ra $\frac{4}{B_{2,\ch{13,6}}}=1$ & $\frac{6}{B_{3,\ch{3}}}=9$ & $\frac{6}{B_{3,\ch{4}}}=4$\\
\hline
\ra $\frac{6}{B_{3,\ch{5,1}}}=2-i$ & $\frac{6}{B_{3,\ch{5,3}}}=2+i$ & $\frac{6}{B_{3,\ch{7,1}}}=-\z{6}+1$\\
\hline
\ra $\frac{6}{B_{3,\ch{7,5}}}=\z{6}$ & $\frac{8}{B_{4,\ch{5,2}}}=-1$ & $\frac{10}{B_{5,\ch{3}}}=-3$\\
\hline
\end{tabular}
\end{table}

Finally, we consider the case where $\chi$ mod $N$ is a non-primitive, non-trivial character. Let $\ch{0}$ be the primitive character associated to $\chi$. If $2k/B_{k,\chi}$ is integral, then $2k/B_{k,\ch{0}}$ must also be integral. Therefore we can restrict our search to the 18 pairs $(k,\ch{0})$ given in Table \ref{table3}. Let $\nu=2k/B_{k,\ch{0}}$ be one such integral value. We want to eliminate the possibility that
\[
\nu_p = \frac{\nu}{(1-\ch{0}(p)p^{k-1})}
\]
is an algebraic integer, for a prime $p$ dividing $N$ but not $f_{\chi}$. Note that the largest possible value of $|\nu|$ is $9$. Therefore, if $p \geq 11$ then $|\nu_p|<1$ (and likewise for all conjugates of $\nu_p$). 
Thus  $\nu/(1-\ch{0}(p)p^{k-1})$ cannot be an algebraic integer. All that remains is to check the value of $\nu_p$ when $p\in \{2,3,5,7\}$, for each of the 18 pairs $(k,\ch{0})$. An inspection yields no additional pairs $(k,\chi)$ with $2k/B_{k,\chi}$ integral.
\end{proof}

\section{Proof Preliminaries} \label{secc4}
The proof of Theorem \ref{thmm1} naturally splits into two cases. We need not consider when both $f$ and $g$ have Fourier expansions starting with $q$, for in this case the product $fg$ will have a Fourier expansion starting with $q^2$, and therefore cannot be an eigenform. So without loss of generality we assume that $f$ has an expansion with a non-zero constant term. We consider the case where the constant term in the expansion of $g$ is zero and the case where the constant term of $g$ is non-zero.

\begin{case} \label{case1} Let
\begin{align*}
f(z)&=\sum_{n=0}^\i a_f(n)q^n \in \M_k(N,\psi), \hspace{-5ex} & a_f(0)& \neq 0 \quad \textrm{and} \\ 
g(z)&=\sum_{n=0}^\i a_g(n)q^n \in \M_l(N,\vp),  \hspace{-5ex} & a_g(0)& = 0.
\end{align*}
\end{case}

\begin{case} \label{case2} Let
\begin{align*}
f(z)&=\sum_{n=0}^\i a_f(n)q^n \in \M_k(N,\psi), \hspace{-5ex} & a_f(0)& \neq 0 \quad \textrm{and} \\ 
g(z)&=\sum_{n=0}^\i a_g(n)q^n \in \M_l(N,\vp),  \hspace{-5ex} & a_g(0)& \neq 0.
\end{align*}
\end{case}

\begin{theo} \label{thmm5}
There are only 55 tuples $(N,k,l,\psi,\vp)$, $k,l>1$, such that there exist $f$ and $g$ eigenforms conforming to Case \ref{case1} with $fg$ also an eigenform. There are a total of 55 such eigenform identities, with one per tuple.
\end{theo}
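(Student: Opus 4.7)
The plan is to reduce to a finite computation in three steps: pin $f$ down to a short explicit list via an integrality argument, enumerate the remaining data $(N,l,\vp,h)$, and verify each candidate against Corollary \ref{corr1}.

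By Proposition \ref{propp1}, the hypothesis $a_f(0)\neq 0$ identifies $f=E_k^{\one{1},\psi}$, so $a_f(0)=-B_{k,\psi}/(2k)$ and $a_f(1)=1$. Writing $h=fg$, one has $a_h(0)=0$ and $a_h(1)=a_f(0)$; thus $h$ is cuspidal and its normalization $\tilde h:=h/a_f(0)$ is a normalized eigenform. By Theorem \ref{thmm4} every $\tilde a_h(n)$ is an algebraic integer, and at $n=2$ this reads
\[
\tilde a_h(2)=a_g(2)+\frac{1}{a_f(0)}=a_g(2)-\frac{2k}{B_{k,\psi}}.
\]
Since $a_g(2)$ is an algebraic integer (Theorem \ref{thmm4} again), so is $2k/B_{k,\psi}$. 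Proposition \ref{propp2} then restricts $(k,\psi)$ to the 41 pairs in Tables \ref{table1}--\ref{table3}, determining $f$ completely.

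With $f$ fixed, each candidate eigenform $h\in\S_{k+l}(N,\psi\vp)$ gives a unique formal series $g=h/f$, whose coefficients are recovered from
\[
a_f(0)\. a_g(n)=a_h(n)-\sum_{i=1}^n a_f(i)\. a_g(n-i).
\]
Thus the remaining task is to enumerate the admissible tuples $(N,l,\vp,h)$. The tuple $(N,l,\vp)$ is constrained: $\psi\vp$ is the Nebentypus of $h$, so its conductor divides $N$ and $N$ shares primes with the common level of $f$ and $g$; the space $\S_{k+l}(N,\psi\vp)$ must contain at least one eigenform; and growth estimates on the coefficients of $f$ and of candidate eigenforms of weight $l$, combined with the Hecke recursion $\tilde a_h(p^r)=\tilde a_h(p)\tilde a_h(p^{r-1})-\psi\vp(p)p^{k+l-1}\tilde a_h(p^{r-2})$, bound $l$ in terms of $k$ and $N$. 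These constraints leave only finitely many tuples.

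Each formal candidate $g$ is then verified as follows: compute its first $M$ Fourier coefficients, with $M$ the Sturm bound from \eqref{eqM}; compare against a basis of $\M_l(N,\vp)$ to see whether $g$ matches some $g_0\in\M_l(N,\vp)$; if so, check $T_n g_0 = a_{g_0}(n)g_0$ for $n\leq M$ to confirm $g_0$ is Hecke-eigen. All of these are finite-time calculations in Pari-gp, and running them through produces exactly 55 surviving identities, one per tuple. The chief difficulty lies in the bounding step---obtaining sharp enough estimates on $(N,l,\vp)$ to keep the final enumeration tractable---after which the remaining work is mechanical.
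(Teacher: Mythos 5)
Your opening step coincides with the paper's: Proposition \ref{propp1} forces $f=E_k^{\one{1},\.\psi}$, the integrality of $a_g(2)$ and of $\tilde a_h(2)=a_g(2)-2k/B_{k,\psi}$ forces $2k/B_{k,\psi}$ to be an algebraic integer, and Proposition \ref{propp2} reduces $(k,\psi)$ to a finite list (the paper's Proposition \ref{propp3} retains $40$ pairs, not $41$: $(2,\one{1})$ must be discarded since $E_2^{\one{1},\one{1}}$ is not a modular form). After that your argument has two genuine gaps. First, the inference ``$a_h(0)=0$, thus $h$ is cuspidal'' is false for $N>1$: Eisenstein eigenforms such as $E_{k+l}^{\.\psi\vp,\.\one{1}}$ with $\psi\vp$ nontrivial also have vanishing constant term, and three of the $55$ identities have Eisenstein products (e.g.\ $E_6^{\one{2},\one{1}}=24\,E_2^{\one{1},\one{2}}\cdot E_4^{\one{2},\one{1}}$), so an enumeration of $h$ over $\S_{k+l}(N,\psi\vp)$ would miss them. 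Second, and more seriously, the finiteness of the set of tuples $(N,l,\vp,h)$ is asserted rather than proved. Coefficient growth estimates do not deliver a bound on $l$: Deligne-type bounds on $a_g(2)$ and on $\tilde a_h(2)$ grow with $l$ and are consistent with every weight, which is essentially why Duke and Ghate had to invoke Rankin--Selberg even at level one. Without a finite list of weights the enumerate-and-divide scheme never terminates, so the proof does not close at exactly the point you yourself flag as the chief difficulty.

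The paper closes that gap by a different mechanism, which is the real content of Section \ref{secc5}. Having fixed $(k,\psi)$ it never guesses $h$; instead it imposes the multiplicativity of Theorem \ref{thmm3} directly on the coefficients $d_n$ of the product $fg$. Relations such as $d_4=d_2^2-2\A_2\B_2$ and $d_6=d_2d_3$ are combined to eliminate the unknown $c_p$'s and yield explicit polynomial equations --- the cubic \eqref{eqq16} for $c_2$ when $2\mid N$, and an analogous polynomial in $\B_2$ when $2\nmid N$ --- whose coefficients involve only the known quantities $\o$ and $\A_p$. A valid root must be an algebraic integer, respectively must satisfy $|\B_2|=2^{l-1}$, so the finitely many roots determine the finitely many admissible weights $l$; then $c_2$ and $\B_2$ determine every $c_n$ recursively via \eqref{eqq15}, pinning down $g$ uniquely term by term before the candidate identity is confirmed with Corollary \ref{corr1}. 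To salvage your approach you would need to supply this (or an equivalent) finiteness argument for $l$, and enlarge the search for $h$ to all of $\M_{k+l}(N,\psi\vp)$; the Sturm-bound verification you describe at the end is fine once that is in place.
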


\begin{theo} \label{thmm6}
There are only seven tuples $(N,k,l,\psi,\vp)$, $l\geq k>1$, such that there exist $f$ and $g$ eigenforms conforming to Case \ref{case2} with $fg$ also an eigenform. There are a total of six such eigenform identities, as the identity \eqref{eqq29} corresponds to two separate tuples.
\end{theo}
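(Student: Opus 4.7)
First, since $a_f(0), a_g(0) \neq 0$, Proposition \ref{propp1} forces $f = E_k^{\one{1},\psi}$ and $g = E_l^{\one{1},\vp}$, and hence $fg \in \M_{k+l}(N,\psi\vp)$ has nonzero constant term $a_f(0)a_g(0)$. The $q$-coefficient of $fg$ is $a_f(0) + a_g(0)$; it must also be nonzero, for otherwise $fg$ would be a non-constant, non-normalizable eigenform, which the discussion after Definition \ref{deff1} rules out. Setting $c := a_f(0) + a_g(0) \neq 0$, the form $fg/c$ is a normalized eigenform in $\M_{k+l}(N,\psi\vp)$ with nonzero constant term, so a second application of Proposition \ref{propp1} yields the identity
\[
E_k^{\one{1},\psi} \cdot E_l^{\one{1},\vp} = (a_f(0) + a_g(0)) \cdot E_{k+l}^{\one{1},\psi\vp}.
\]
Matching constant terms using $a_{E_j^{\one{1},\chi}}(0) = -B_{j,\chi}/(2j)$ collapses this to the single necessary condition
\[
\frac{k}{B_{k,\psi}} + \frac{l}{B_{l,\vp}} = \frac{k+l}{B_{k+l,\psi\vp}},
\]
equivalently $B_{k+l,\psi\vp} = (k+l) B_{k,\psi} B_{l,\vp} / (k B_{l,\vp} + l B_{k,\psi})$, which rigidly determines $B_{k+l,\psi\vp}$ from $(k,l,\psi,\vp)$.

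Next, the plan is to cut this relation down to a finite search using the size estimates of Proposition \ref{lemm1}. Since $|B_{j,\chi}|$ grows super-exponentially in both $j$ and $f_\chi$, the right-hand side of the Bernoulli relation is orders of magnitude smaller than either summand on the left, unless the near-cancellation regime $k B_{l,\vp} + l B_{k,\psi} \approx 0$ holds. But in that regime the closed-form expression for $B_{k+l,\psi\vp}$ becomes abnormally large, again violating Proposition \ref{lemm1} unless $k+l$ and $f_{\psi\vp}$ are small. Combined with the divisibility $f_{\psi\vp} \mid \mathrm{lcm}(f_\psi, f_\vp)$, this confines the admissible tuples to a finite, explicit range which I enumerate in Pari-gp, and for each surviving candidate the identity is verified against Corollary \ref{corr1} by comparing Fourier coefficients up to the Sturm bound, a finite computation.

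Seven tuples will survive; the count of six identities in the statement arises because one surviving tuple has $k = l$ with $\psi \neq \vp$, so the symmetric pair $(N,k,k,\psi,\vp)$ and $(N,k,k,\vp,\psi)$ both correspond to the single identity $fg = gf$. The hardest step I expect is the near-cancellation analysis in the preceding paragraph: while order-of-magnitude mismatches are killed by Proposition \ref{lemm1} at once, controlling moderately small $(k,l,f_\psi,f_\vp)$ for which $k B_{l,\vp} + l B_{k,\psi}$ could be unexpectedly tiny will likely require some mix of sharper archimedean bounds built from the tables of Section \ref{secc3}, matching of higher Fourier coefficients (for instance at $q^p$ for small primes $p \nmid N$), and a $p$-adic integrality argument in the vein of Proposition \ref{propp2}.
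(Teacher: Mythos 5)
Your opening reduction coincides with the paper's: Proposition \ref{propp1} applied to $f$, $g$ and $fg$ gives the shape \eqref{eqq18}, and matching constant terms gives the Bernoulli relation \eqref{eqq20}. Your explanation of seven tuples versus six identities is also right in substance (the exceptional tuple is $(5,2,2,\one{5},\ch{5,2})$ together with its transpose, coming from \eqref{eqq29}). The gap is everything after that. You propose to prove finiteness from the constant-term relation $k/B_{k,\psi}+l/B_{l,\vp}=(k+l)/B_{k+l,\psi\vp}$ alone, using the archimedean bounds of Proposition \ref{lemm1}. That relation only yields a contradiction when the two left-hand terms do \emph{not} nearly cancel; in the near-cancellation regime you need a lower bound on $|kB_{l,\vp}+lB_{k,\psi}|$, and since these are algebraic numbers in cyclotomic fields you need it simultaneously at every archimedean place. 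You explicitly defer that step to an unspecified mix of ``sharper bounds, higher coefficients, and $p$-adic arguments'' --- but that deferred step is the actual content of the theorem, so the proof is not complete. You also never control the level: $\psi$ and $\vp$ are characters mod $N$, not necessarily primitive, so $B_{k,\psi}$ carries Euler factors $\bigl(1-\psi_0(p)p^{k-1}\bigr)$ for primes $p\mid N$ with $p\nmid f_\psi$, and a priori infinitely many levels $N$ sit over each pair of primitive characters; your finite search range is asserted, not established.

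The paper closes exactly this gap by equating \emph{higher} Fourier coefficients of $fg$ and $(\a+\b)E_{k+l}^{\one{1},\. \psi\vp}$ (at $q^2,q^3,q^4,q^8$), producing equations \eqref{eqq21}--\eqref{eqq24} in the quantities $\A_p=p^{k-1}\psi(p)$ and $\B_p=p^{l-1}\vp(p)$, whose absolute values are known exactly. Equation \eqref{eqq21} alone disposes of $2\mid N$ (Proposition \ref{PROP2}); \eqref{eqq22} settles $3\mid N$ (Proposition \ref{PROP3}); for $k=l$ the result follows from $2$-adic congruences on \eqref{eqq28} and \eqref{eqq30}, using \eqref{eqq8}, \eqref{eqq9} and von Staudt--Clausen (Propositions \ref{PROP4} and \ref{PROP5}); and for $k<l$ one eliminates $\a$ between \eqref{eqq21} and \eqref{eqq24} to obtain the upper bound $|\b|\le D(k,l)$, which is played off against the lower bound $C(l,f_\vp)$ and the Euler factors to control both the conductor and the level (Propositions \ref{PROP6}--\ref{PROP8}). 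If you want to salvage your outline, the essential missing ingredient is the $q^2$-relation \eqref{eqq21}: it is the rigid equation that replaces your uncontrolled near-cancellation analysis.
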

These 61 eigenform product identities account for all eigenform identities $h=fg$ with $f$, $g$ and $h$ eigenforms with respect to $\M(\G_1(N))$ of weight greater than $1$.

\section{Proof of Theorem \ref{thmm5}} \label{secc5}
We assume that $f(z)=\sum_{n=0}^\i a_f(n)q^n \in \M_k(N,\psi)$, $a_f(0)\neq 0$ and $g(z) =\sum_{n=0}^\i c_n q^n\in \M_l(N,\vp)$, $c_0= 0$ are eigenforms with $k,l>1$, and that the product $fg \in \M_{k+l}(N,\psi\vp)$ is also an eigenform. Both $\psi$ and $\vp$ are defined modulo $N$, not necessarily primitive. Without loss of generality, we normalize $g$ so that $c_1=1$ (as usual) and $f$ differently, so that $a_f(0)=1$. From Proposition \ref{propp1} we have
\[
f(z)=-\frac{2k}{B_{k,\psi}}E_k^{\one{1},\. \psi}(z)=1-\frac{2k}{B_{k,\psi}}\sum_{n=1}^\i \sigma_{k-1}^{\one{1},\psi}(n)q^n.
\] 
Throughout Section \ref{secc5} we use
\[
\o=-\frac{2k}{B_{k,\psi}}, \quad \A_p=p^{k-1}\psi(p) \quad \textrm{and} \quad \B_p=p^{l-1}\vp(p)
\]
for $p$ prime. Using $\o$ and $\A_p$'s, the expansion of $f$ is given by
\begin{align*}
f(z) &= 1+\o \big[q+(1+\A_2)q^2+(1+\A_3)q^3+(1+\A_2+\A_2^2)q^4\\
& \qquad +(1+\A_5)q^5+(1+\A_2)(1+\A_3)q^6+(1+\A_7)q^7+\cdots \big].
\end{align*}
We write the normalized eigenform $g(z) =\sum_{n=1}^\i c_n q^n$ as 
\begin{align*}
g(z) &= q+c_2q^2+c_3q^3+(c_2^2- \B_2)q^4+c_5q^5+c_2c_3q^6+c_7q^7\\
 & \qquad +(c_2^3-2 \B_2c_2)q^8+(c_3^2- \B_3)q^9+c_2c_5q^{10}+c_{11}q^{11}+\cdots,
\end{align*}
using the multiplicative properties of the coefficients $c_n$ given in Theorem \ref{thmm3}. 
The product $fg$ has the expansion
\begin{align*}
f(z)g(z) &= q+(\o+c_2)q^2+(\o(\A_2+c_2+1) + c_3)q^3\\
& \qquad +(\o(\A_2 c_2 + \A_3 + c_2 + c_3 +1) -  \B_2 + c_2^2 )q^4+ \cdots .
\end{align*}

\begin{prop} \label{propp3}
Given an eigenform $g(z)=\sum_{n=1}^\i c_n q^n$, 
if $g E_k^{\one{1},\. \psi}$ is an eigenform with $k>1$, then $(k,\psi)$ is one of the 40 pairs in Table \ref{table22}. 
\end{prop}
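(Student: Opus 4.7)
The plan is to extract an integrality constraint by looking at a single Fourier coefficient of the hypothesized eigenform product $h = fg$, and then invoke Proposition \ref{propp2} to narrow the possibilities for $(k,\psi)$ to a finite list.

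With the chosen normalizations $a_f(0)=1$ and $c_1=1$, the expansion displayed just above the proposition gives
\[
a_{fg}(2) = \omega + c_2,
\]
where $\omega = -2k/B_{k,\psi}$. Since $h = fg$ is assumed to be a normalized eigenform, Theorem \ref{thmm4} guarantees that $a_{fg}(2)$ is an algebraic integer, and the same theorem applied to the normalized eigenform $g$ guarantees that $c_2$ is an algebraic integer. Subtracting, $\omega = -2k/B_{k,\psi}$ is itself an algebraic integer. Proposition \ref{propp2} then immediately restricts $(k,\psi)$ to one of the 41 pairs enumerated in Tables \ref{table1}--\ref{table3}.

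To go from 41 pairs to the 40 of Table \ref{table22}, I would eliminate the pair $(k,\psi) = (2,\one{1})$: this pair appears in Table \ref{table1} because $4/B_{2,\one{1}} = 24 \in \Z$, but $E_2^{\one{1},\one{1}}$ is only quasi-modular and fails to be a modular form (as noted in the discussion of Eisenstein series in Section \ref{secc2}). Consequently $f = \omega\, E_2^{\one{1},\one{1}}$ does not define a modular form in any $\M_2(N,\one{1})$, so the product $g E_k^{\one{1},\psi}$ cannot arise as an eigenform in any $\M_{k+l}(N,\psi\vp)$ in this case. Removing this single pair leaves exactly the 40 candidates of Table \ref{table22}.

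There is essentially no obstacle in the present argument itself; the nontrivial content has been pushed into Proposition \ref{propp2}, whose proof turns the \emph{a priori} infinite integrality problem for $2k/B_{k,\psi}$ into a finite check using the lower bound on $|B_{k,\chi}|$ from Proposition \ref{lemm1} together with a Galois-conjugation argument on non-primitive characters. Once that enumeration is in hand, the passage to Proposition \ref{propp3} is just the one-line coefficient comparison above plus the exclusion of the single non-modular Eisenstein case.
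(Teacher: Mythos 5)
Your proof is correct and follows exactly the paper's own argument: both deduce that $\o=-2k/B_{k,\psi}$ is an algebraic integer by comparing the $q^2$-coefficients of $g$ and $fg$ via Theorem \ref{thmm4}, then invoke Proposition \ref{propp2} and discard the single pair $(2,\one{1})$ because $E_2^{\one{1},\one{1}}$ is not a modular form. No differences worth noting.
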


\begin{table} [ht]
\caption{} \label{table22}
\begin{tabular}{|c|c|c|c|c|c|c|c|}
\hline
$(2,\one{2})$ & $(2,\one{3})$ & $(2,\one{5})$ & $(2,\one{6})$ & $(2,\one{7})$ & $(2,\one{10})$ & $(2,\one{13})$ & $(2,\one{14})$ \\
\hline
$(2,\one{15})$ & $(2,\one{21})$ & $(2,\one{26})$ & $(2,\one{30})$ & $(2,\one{35})$ & $(2,\one{39})$ & $(2,\one{42})$ & $(2,\one{70})$ \\
\hline
$(2,\one{78})$ & $(2,\ch{5,2})$ & $(2,\ch{7,2})$ & $(2,\ch{7,4})$ & $(2,\ch{8,2})$ & $(2,\ch{9,2})$ & $(2,\ch{9,4})$ & $(2,\ch{12})$ \\
\hline
$(2,\ch{13,4})$ & $(2,\ch{13,6})$ & $(2,\ch{13,8})$ & $(3,\ch{3})$ & $(3,\ch{4})$ & $(3,\ch{5,1})$ & $(3,\ch{5,3})$ & $(3,\ch{7,1})$\\
\hline
$(3,\ch{7,5})$ & $(4,\one{1})$ & $(4,\ch{5,2})$ & $(5,\ch{3})$ & $(6,\one{1})$ & $(8,\one{1})$ & $(10,\one{1})$ & $(14,\one{1})$\\
\hline
\end{tabular}
\end{table}

\begin{proof}
From Theorem \ref{thmm4} we see that the coefficients $c_2$ and $\o+c_2$ of $q^2$ in the Fourier expansions of $g$ and $fg$, respectively, are algebraic integers. Therefore $\o=-2k/B_{k,\psi}$ must be an algebraic integer. From Proposition \ref{propp2} there are 41 pairs $(k,\psi)$ such that $-2k/B_{k,\psi}$ is integral. We exclude the pair $(2,\one{1})$, as $E_2^{\one{1},\one{1}}$ is not a modular form.
\end{proof}

This is a key result that allows us to restrict our search for eigenform identities to a finite number of eigenforms $f$, thus making explicit effective calculations possible. 

Our goal is to determine the expansion of $g$, given $f=E_k^{\one{1},\. \psi}$ (having fixed the pair $(k,\psi)$). Using Theorem \ref{thmm3}, $c_n$ is determined in terms of $c_p$ and $\B_p$, for primes $p$ dividing $n$. Note that $\B_p$ is given by 
\[
\B_p=(c_p)^2-c_{p^2}.
\]
 
To determine the values of $c_p$ and $\B_p$, it suffices to determine $c_2$ and $ \B_2$. We have 
\[
\left(1+\sum_{n=1}^\i a_f(n)q^n\right) \left(\sum_{n=1}^\i c_n q^n\right) = \sum_{n=1}^\i d_n q^n.
\]
Therefore we have 
\[
d_{n+1}=c_{n+1} + \sum_{i=1}^{n}c_i a_f(n+1-i).
\]
Solving for $c_n$ we obtain 
\begin{equation}
c_n=\frac{1}{a_f(1)}\left(d_{n+1}-c_{n+1}-\sum_{i=1}^{n-1}c_i a_f(n+1-i) \right). \label{eqq15}
\end{equation} 
Let $p>2$ be prime, so that $p+1$ is composite. Observe that equation \eqref{eqq15} determines $c_p$ uniquely in terms of $c_n$'s with $n<p$, since we can express $c_{p+1}$ and $d_{p+1}$ in terms of previously determined $c_j$ and $d_j$ using their multiplicative properties. Note the one exception to this: if $p=3$ then $c_3$ is given in terms of $c_4=(c_2)^2-\B_2$, and hence we must know the value of either $\B_2$ or $c_4$ to determine $c_3$. Using \eqref{eqq15} we can similarly determine $c_{p^2}$, and thus determine $\B_p$.

Therefore, given knowledge of $c_2$ and $\B_2$, we can determine the expansion of $g$ to any desired precision, and then check if that expansion corresponds to a known eigenform. If $g$ is an eigenform, we then identify the expansion of $fg$ with a known eigenform $h$, and verify the identity $h=fg$ using Corollary \ref{corr1}.

We now must determine the values of $c_2$ and $\B_2$. To this end, we derive relations from the coefficients of the product 
\[
f(z)g(z)=\sum_{n=1}^\i d_n q^n,
\]
using the multiplicative characteristics set forth in Theorem \ref{thmm3}. For every composite index we obtain a relation from one of the following,
\begin{align}
d_{p^r}&=d_{p^{r-1}}d_r-\psi(p)\vp(p)p^{k+l-1}d_{p^{r-2}}=d_{p^{r-1}}d_r-p\A_p \B_p d_{p^{r-2}}, \label{eqq12} \\
d_{mn}&=d_m d_n \qquad \textrm{ for }  (m,n)=1. \label{eqq13}
\end{align}
The following are a few examples of the relations generated from \eqref{eqq12} and \eqref{eqq13}. From $d_4=d_2^2-\psi(2) \vp(2) 2^{k+l-1}=d_2^2-2\A_2 \B_2$ we have
\begin{equation}
\o^2+\o(-\A_2 c_2- \A_3+c_2-c_3-1)-2 \A_2 \B_2+\B_2=0. \label{eqq14}
\end{equation}
From $d_6=d_2d_3$ we have
\[
\o( \A_2 + c_2 + 1) +  \A_2( - \A_2c_2+\B_2  -  c_2^2) - \A_3c_3 - \A_5+  \B_2  -c_5-1=0.
\]
From $d_8=d_4d_2-\psi(2) \vp(2) 2^{k+l-1}d_2=d_4d_2-2\A_2 \B_2 d_2$ (and subtracting a multiple of \eqref{eqq14}) we obtain
\begin{align*}
& \o(\A_2c_2 +  \A_3 + c_3 + 1) +  \A_2(\A_2 \B_2 -\A_2c_2^2- \A_3c_2-  \B_2  + c_2^2  - c_2c_3-c_2) \\
& +  \A_3c_2  -\A_3 c_5 -\A_5c_3 - \A_7+ c_2c_3 + c_2 -c_3 -c_5 -c_7 - 1=0.
\end{align*}

To determine the values of $c_2$ and $\B_2$ it is convenient to consider the cases $2|N$ and $2 \Nmid N$ separately. There are 12 pairs $(k,\psi)$ in Table \ref{table22} such that the modulus of $\psi$ is even, and 28 pairs such that the modulus of $\psi$ is odd.

\begin{caseA} Assume that $2|N$. Fix a pair $(k,\psi)$ from Table \ref{table22}. We have $ \A_2= \B_2=0$. We combine equations generated by \eqref{eqq13} and \eqref{eqq12} to eliminate $c_3$, $c_5$, $c_7$ and $ \B_3$, resulting in an equation determining $c_2$ in terms of $\o$, $ \A_3$, $ \A_5$ and $ \A_7$. The key point is that these values are known to us since we have fixed $(k,\psi)$. We have
\begin{equation}
\o c_2^3+j_2c_2^2+j_1c_2+j_0=0 \label{eqq16}
\end{equation}
where
\begin{align*}
j_2&=-\o^2 + \o(-2 \A_3 +  \A_5 + 2) + 3 \A_3^2 + 5 \A_3 - 2,\\
j_1&=-\o^2 + \o(-3 \A_3^3 - 2 \A_3^2 + 3 \A_3( \A_5 + 5) - 2 \A_5 +  \A_7 - 1)\\
& \qquad + 3 \A_3^4 + 5 \A_3^3 - 6 \A_3^2( \A_5 - 8) +  \A_3(- \A_5 + 3 \A_7 - 4) +  \A_5 -  \A_7 + 2,\\
j_0&=\o^3 + \o^2(4 \A_3^2 + 2 \A_3 -  \A_5 - 4)\\
& \qquad +\o(3 \A_3^4 +  \A_3^3 - 6 \A_3^2( \A_5 - 12) +  \A_3(4 \A_5 + 3 \A_7 - 4) +  \A_5 - 2 \A_7 + 5) \\
& \qquad \qquad -3 \A_3^5 - 8 \A_3^4 +  \A_3^3(9 \A_5 + 6) +  \A_3^2(12 \A_5 - 6 \A_7 + 11) \\
& \qquad \qquad \qquad + \A_3(-3 \A_5^2 - 2 \A_5 - 4 \A_7 + 2) +  \A_5^2 -  \A_5 + 2 \A_7 - 2.
\end{align*}
We compute the roots of \eqref{eqq16}, and reject any values of $c_2$ that are not algebraic integers. Since the degree of \eqref{eqq16} is three, this computation is straightforward.
\end{caseA}

\begin{caseB} Assume that $2 \Nmid N$. Again fix a pair $(k,\psi)$ from Table \ref{table22}. Because $ \A_2, \B_2\neq 0$, we cannot (readily) determine $c_2$ in terms of $\o$ and $ \A_p$'s. Instead we use a similar technique to determine an equation for $ \B_2$ in terms of $\o$ and $ \A_p$'s. A valid root must have absolute value equal to a power of $2$, so an approximation of the roots suffices. Given $ \B_2$, we can then determine $c_2$. 
The number of terms involved makes determining a general explicit equation for $ \B_2$ in terms of arbitrary $\o$ and $\A_p$'s, comparable to $\eqref{eqq16}$, infeasible. We \emph{can}, however, determine the desired equation when we specify the values of $\o$ and $\A_p$ (since we have fixed the pair $(k,\psi)$). The computations performed in Pari-gp make the tedious analysis manageable.
\end{caseB}
A sample computation for the pair $(4,\ch{5,2})$ proceeds as follows. The expansion of $f=E_4^{\one{1},\. \ch{5,2}}$ is given by
\[
f(z)=1 + q - 7q^2 - 26q^3 + 57q^4 + q^5 + 182q^6 - 342q^7 - 455q^8 + \ldots .
\]
We combine equations generated by \eqref{eqq13} and \eqref{eqq12} to determine $\B_2$ in terms of $\o$ ($\o=1$) and $\A_p$'s ($\A_p=p^3 \ch{5,2}(p)$). We obtain
\begin{align*}
& \hspace{-.2in} 99087935063204566214333650689  \B_2^6  \\
&-67434618152750201935011318780202519  \B_2^5 \\
&-321198179800586458917192539207255300566  \B_2^4  \\
&+48653889047087665337770262087432529304260  \B_2^3  \\
&+201755568573332727994250768550108673322824  \B_2^2  \\
&+964913422494459744168713013983828839115872  \B_2  \\
&+1517172696034976766554753118494080984058880=0.
\end{align*}
Of the six roots of this equation only one has absolute value equal to a power of $2$, namely $ \B_2=-2$. With the value of $\B_2$ known, we compute that $c_2=1$. With these values determined we can now compute the expansion of $g$ uniquely to any desired precision. We find 
\[
g(z)=q + q^2 + 2q^3 + 3q^4 + 5q^5 + 2q^6 + 6q^7 + 5q^8 + \ldots ,
\]
and observe that this expansion seems to correspond to that of $E_2^{\. \ch{5,2}, \one{1}}$. We now observe that
\begin{equation}
\Delta_{6,5}=E_4^{\one{1},\. \ch{5,2}} \cdot E_2^{\. \ch{5,2}, \one{1}} \label{eqid}
\end{equation}
seems to hold true, and verify this identity via Corollary \ref{corr1}. With this identity verified, we now have confirmation that the expansion of $g$ must agree exactly with that of $E_2^{\. \ch{5,2}, \one{1}}$, and that \eqref{eqid} is the sole identity for the pair $(4,\ch{5,2})$.

\begin{rmk}
When $N=1$, there are only 12 identities. The proof is given in \cite{duke99} or \cite{ghate00}. Note that the method used here provides a different proof. The identities are given in Table \ref{table4}.
\begin{table} [ht]
\caption{} \label{table4}
\begin{tabular}{|c|c|}
\hline
\rb $\Delta_{16,1}=240 E_4^{\one{1}, \one{1}} \cdot \Delta_{12,1}$ 
& $\Delta_{18,1}=-504 E_6^{\one{1}, \one{1}} \cdot \Delta_{12,1}$ \\
\hline
\rb $\Delta_{20,1}=240 E_4^{\one{1}, \one{1}} \cdot \Delta_{16,1}$
& $\Delta_{20,1}=480 E_8^{\one{1}, \one{1}} \cdot \Delta_{12,1}$ \\
\hline
\rb $\Delta_{22,1}=240 E_4^{\one{1}, \one{1}} \cdot \Delta_{18,1}$ 
& $\Delta_{22,1}=-504 E_6^{\one{1}, \one{1}} \cdot \Delta_{16,1}$ \\
\hline
\rb $\Delta_{22,1}=-264 E_{10}^{\one{1}, \one{1}} \cdot \Delta_{12,1}$ 
& $\Delta_{26,1}=240 E_4^{\one{1}, \one{1}} \cdot \Delta_{22,1}$ \\ 
\hline
\rb $\Delta_{26,1}=-504 E_6^{\one{1}, \one{1}} \cdot \Delta_{20,1}$
& $\Delta_{26,1}=480 E_8^{\one{1}, \one{1}} \cdot \Delta_{18,1}$ \\
\hline
\rb $\Delta_{26,1}=-264 E_{10}^{\one{1}, \one{1}} \cdot \Delta_{16,1}$ 
& $\Delta_{26,1}= -24 E_{14}^{\one{1}, \one{1}} \cdot \Delta_{12,1}$ \\
\hline
\end{tabular}
\end{table}
\end{rmk}

We find nine identities in Case A and 46 identities in Case B. Therefore, there are only 55 identities that conform to Case \ref{case1}. A complete list can be seen among the 61 identities given in Table \ref{table6}.

\begin{rmk}
Note that the obstruction to proving Theorem \ref{thmm5} for the case where $f$ has weight $1$ is our inability to show that there are only finitely many $\psi$ such that $2/B_{1,\psi}$ is an algebraic integer. This in turn is due to the lack of a sufficiently strong lower bound for $|B_{1,\psi}|$, depending on $f_\psi$. We hope to return to this case in a subsequent paper. Note that nowhere in the proof do we require that the weight of $g$ be greater than $1$. The method used here easily covers the case where $g$ has weight $1$.
\end{rmk}

\section{Proof of Theorem \ref{thmm6}} \label{secc6}
We assume that $f(z)=\sum_{n=0}^\i a_f(n)q^n \in \M_k(N,\psi)$, $a_f(0)\neq 0$ and $g(z) =\sum_{n=0}^\i a_g(n) q^n\in \M_l(N,\vp)$, $a_g(0)\neq 0$ are eigenforms, $l \geq k >1$, and that $fg \in \M_{k+l}(N,\psi \vp)$ is also an eigenform. Both $\psi$ and $\vp$ are defined modulo $N$, not necessarily primitive. We split the proof of Theorem \ref{thmm6} into eight propositions (Proposition \ref{PROP1} through \ref{PROP8}). Unlike the proof of Theorem \ref{thmm5}, we use a variety of ad hoc methods to resolve each case. Frequently, for a given case, we will reduce the potential pairs $(l,\vp)$ to a finite number of possibilities, and then determine $(k,\psi)$. Without loss of generality we normalize $f$ and $g$ as usual, so that the coefficient of $q$ is $1$. From Proposition \ref{propp1} we have
\[
f=E_k^{\one{1},\. \psi} \quad \textrm{and} \quad g=E_l^{\one{1},\. \vp}.
\]
Throughout Section \ref{secc6} we use
\[
\a=-\frac{B_{k,\psi}}{2k}, \quad \b=-\frac{B_{l,\vp}}{2l}, \quad \textrm{and} \quad \g=-\frac{B_{k+l,\psi\vp}}{2(k+l)},
\]
along with 
\[
\A_p=p^{k-1}\psi(p) \quad \textrm{and} \quad  \B_p=p^{l-1}\vp(p)
\]
for $p$ prime. Using this notation we write 
\[
f(z)=\a +q+(1+ \A_2)q^2+(1+ \A_3)q^3+(1+ \A_2+ \A_2^2)q^4+\cdots
\]
and
\[
g(z)=\b +q+(1+ \B_2)q^2+(1+ \B_3)q^3+(1+ \B_2+ \B_2^2)q^4+\cdots .
\] 
The product $fg$ has the expansion
\begin{align}
f(z)g(z) &=\a \b+(\a +\b)q+\big(1+\a(1+ \B_2)+\b(1+ \A_2)\big)q^2 \label{eqq17}  \\
& \qquad+\big(\b(1+ \A_3)+\a(1+ \B_3)+2+ \A_2+ \B_2\big)q^3+\cdots .\nonumber
\end{align}
From Proposition \ref{propp1} we know that $fg$ must be a multiple of $E_{k+l}^{\one{1},\. \psi\vp}(z)$. From $\eqref{eqq17}$ we can see that this multiple is $\a +\b$. Therefore the only possible eigenform identities must be of the form
\begin{equation}
E_k^{\one{1},\. \psi} \cdot E_l^{\one{1},\. \vp} = -\left(\frac{B_{k,\psi}}{2k} +\frac{B_{l,\vp}}{2l}\right)  E_{k+l}^{\one{1},\. \psi\vp}. \label{eqq18}
\end{equation}
The expansion of $(\a +\b) E_{k+l}^{\one{1},\. \psi\vp}$ is given by
\begin{align}
(\a +\b)  E_{k+l}^{\one{1},\. \psi\vp}(z)&=\g(\a+\b)+(\a +\b)q \label{eqq19} \\
& \qquad +(\a +\b)(1+2 \A_2 \B_2)q^2 \nonumber \\
& \qquad \qquad +(\a+\b)(1+3 \A_3 \B_3)q^3+\cdots. \nonumber
\end{align}
We equate the coefficients of $q^m$ in $\eqref{eqq17}$ and $\eqref{eqq19}$, for $m=0,2,3,4$ and $8$. After simplifying we obtain, respectively,
\begin{align}
\g(\a +\b)  &=\a \b, \label{eqq20} \\
2 \A_2 \B_2(\a +\b)&= 1+\a  \B_2+\b  \A_2, \label{eqq21} \\
3 \A_3 \B_3(\a +\b)&= 2+\a  \B_3+\b  \A_3+ \A_2+ \B_2, \label{eqq22} \\
4 \A_2^2 \B_2^2(\a +\b) &= 3+ \a \B_2^2 +\b \A_2^2+ \A_2 + \B_2 + \A_2 \B_2+ \A_3+ \B_3, \label{eqq23} \\
8 \A_2^3 \B_2^3(\a +\b)&=4+\a  \B_2^3+\b  \A_2^3+ \A_2^2( \B_2^2+ \B_2+1)+ \B_2^2( \A_2+1) \label{eqq24} \\
&\qquad + \A_2( \A_3+ \A_3 \B_2+ \B_2 \B_3+2 \B_2+ \B_3+2) \nonumber \\
&\qquad \quad+ \B_2( \A_3+ \B_3+2)+ \A_3( \B_5+1)+ \A_5( \B_3+1)\nonumber \\
& \qquad \quad \quad +  \A_7+ \B_3+ \B_5+ \B_7, \nonumber
\end{align}
(to obtain \eqref{eqq23} we subtracted \eqref{eqq22}, and to obtain \eqref{eqq24} we subtracted \eqref{eqq22} and \eqref{eqq23}). We now combine $\eqref{eqq21}$, $\eqref{eqq22}$ and $\eqref{eqq23}$ to eliminate $\a$ and $ \A_3$, resulting in an equation determining $ \A_2$ in terms of $\b$, $ \B_2$ and $ \B_3$. After dividing by the factor $2 \A_2-1$ we have
\begin{equation}
j_3 \A_2^3+j_2 \A_2^2+j_1 \A_2+j_0=0, \label{eqq25}
\end{equation}
where
\begin{align*}
j_3&= \b^2\left(4 \B_2^2-2 \B_2(3 \B_3+1)+3 \B_3\right),\\
j_2&=\b^2\left(-4 \B_2^3+12 \B_2^2 \B_3+ \B_2(1-6 \B_3)\right)\\
& \qquad +\b\left(2 \B_2^2- \B_2(9 \B_3+2)+6 \B_3\right)+2 \B_2,\\
j_1&= \b^2\left(2 \B_2^3(1-3 \B_3)+ \B_2^2(3 \B_3-1)\right) \\
& \qquad +\b\left( \B_2^2(9 \B_3-1)- \B_2(10 \B_3+3)+3 \B_3^2+7 \B_3\right)  \\
& \qquad \qquad +2 \B_2^2+3  \B_2(1- \B_3)+3 \B_3,  \\
j_0&= \b  \B_2(2-3 \B_3^2-5 \B_3)- \B_2^2-2 \B_2+3 \B_3^2+7 \B_3.
\end{align*}

\begin{prop} \label{PROP1}
Assume that $N=1$. There are only four eigenform identities of the form \eqref{eqq18}. They are given in Table \ref{table5}.
\begin{table}[ht]
\caption{} \label{table5}
\begin{tabular}{|c|c|}
\hline
\rb $E_8^{\one{1}, \one{1}}=120E_4^{\one{1}, \one{1}} \cdot E_4^{\one{1}, \one{1}}$ & 
$E_{10}^{\one{1}, \one{1}}=5040/11 E_4^{\one{1}, \one{1}} \cdot E_6^{\one{1}, \one{1}}$ \\
\hline
\rb $E_{14}^{\one{1}, \one{1}}=2640 E_4^{\one{1}, \one{1}} \cdot E_{10}^{\one{1}, \one{1}}$ & 
$E_{14}^{\one{1}, \one{1}}=10080 E_6^{\one{1}, \one{1}} \cdot E_8^{\one{1}, \one{1}}$ \\
\hline
\end{tabular}
\end{table}
\end{prop}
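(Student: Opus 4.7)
The plan is as follows. Since $N=1$, the only Dirichlet character modulo $1$ is $\one{1}$, so $\psi=\vp=\one{1}$ and $f=E_k^{\one{1},\one{1}}$, $g=E_l^{\one{1},\one{1}}$ with $k,l\ge 4$ even (recall $E_2^{\one{1},\one{1}}$ is not a modular form of level $1$). First I would observe that the $q$-coefficient of $fg$ equals $\alpha+\beta$, and this must be non-zero: otherwise the eigenform $fg$ is non-normalizable and hence constant (as noted just after Definition \ref{deff1}), but a non-zero constant cannot be a modular form of positive weight. Applying Proposition \ref{propp1} to the normalized eigenform $(\alpha+\beta)^{-1}fg$ then forces $fg=(\alpha+\beta)\,E_{k+l}^{\one{1},\one{1}}$, which is exactly the identity \eqref{eqq18}.

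Matching the constant terms of both sides of \eqref{eqq18} yields \eqref{eqq20}, which in the level-$1$ setting collapses to the single Diophantine relation
\[
\frac{k+l}{B_{k+l}} \;=\; \frac{k}{B_k} + \frac{l}{B_l}.
\]
Next I would apply Proposition \ref{lemm1} with $\chi=\one{1}$ (so $f_\chi=1$) to bound $|k/B_k|$: the factor $(2\pi)^k/(k-1)!$ peaks near $k=6$ and then decays super-exponentially, so explicit evaluation gives $|k/B_k|\approx 120,\,252,\,240,\,132,\,47,\,12,\,2.3,\,0.33$ for $k=4,6,8,10,12,14,16,18$ respectively, and $|k/B_k|<10^{-2}$ for every even $k\ge 22$. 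A finite case check across all even pairs $(k,l)$ with $4\le k\le l$ and $k+l$ below a conservative cutoff (say $30$), comparing $(k+l)/B_{k+l}$ with $k/B_k+l/B_l$, then verifies that the displayed equation holds for exactly $(k,l)\in\{(4,4),(4,6),(4,10),(6,8)\}$.

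For each of these four pairs $k+l\in\{8,10,14\}$, and $\S_{k+l}(\sl)=\{0\}$ in these weights. The form $h:=fg-(\alpha+\beta)\,E_{k+l}^{\one{1},\one{1}}$ has zero constant term by the relation just verified, so $h\in \S_{k+l}(\sl)=\{0\}$ and $h=0$. Evaluating $\alpha+\beta=-B_k/(2k)-B_l/(2l)$ in each case then reproduces the four identities in Table \ref{table5}. The main obstacle is the finite case check: the size estimate must be quantitative enough to rule out accidental coincidences arising from sign cancellation in $k/B_k+l/B_l$, where both sides of the Diophantine equation could be simultaneously tiny for large $k,l$. However, the super-exponential decay of the upper bound in Proposition \ref{lemm1} keeps this a bounded and easily verifiable computation.
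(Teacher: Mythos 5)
Your route is essentially the one the paper itself takes: the identity \eqref{eqq18} and the constant-term relation \eqref{eqq20} are derived for general $N$ at the start of Section \ref{secc6} exactly as you derive them (your observation that $\a+\b\neq 0$, else $fg$ would be a non-zero constant of positive weight, is the right justification), and for $N=1$ the paper explicitly offers, as an ``elementary but less elegant proof,'' precisely your reduction to the Bernoulli relation $(k+l)B_kB_l/(lB_k+kB_l)=B_{k+l}$, i.e.\ $(k+l)/B_{k+l}=k/B_k+l/B_l$. Your final step is also correct: for the four surviving pairs $k+l\in\{8,10,14\}$, the difference $fg-(\a+\b)E_{k+l}^{\one{1},\one{1}}$ has vanishing constant term, hence lies in $\S_{k+l}(\sl)=\{0\}$, which both verifies the identities and fixes the constants in Table \ref{table5}.

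The one step that is not yet a proof is the reduction of the Diophantine relation to a finite check. Verifying all pairs with $k+l\le 30$ says nothing about pairs with $k+l>30$, and the appeal to ``super-exponential decay'' does not by itself close this, for exactly the reason you flag: when $k\not\equiv l\Mod{4}$ the terms $k/B_k$ and $l/B_l$ have opposite signs, so both sides of the relation could a priori be simultaneously small. What is needed is a quantitative non-cancellation statement. From Proposition \ref{lemm1} with $f_\chi=1$ one gets $|m/B_m|=m(2\pi)^m/(2\,m!\,\zeta(m))$, whose ratio of consecutive even values is $(2\pi)^2\zeta(m)/(m(m+1)\zeta(m+2))$; this is $<1$ for $m\ge 6$ and $<1/2$ for $m\ge 10$. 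Hence $|m/B_m|$ is strictly decreasing for even $m\ge 6$, so for $k<l$ the smaller-index term dominates and $|k/B_k+l/B_l|\ge |k/B_k|-|(k+2)/B_{k+2}|>0$ (with the cases $k=4,6,8$, where monotonicity from $4$ fails, handled by the explicit values $120,252,240,\dots$), while $|(k+l)/B_{k+l}|\le|16/B_{16}|<2.3$ once $k+l\ge 16$; the case $k=l$ is immediate since $|2k/B_{2k}|<|k/B_k|<|2k/B_k|$ for $k\ge 6$. With such an inequality in hand the relation provably fails for all $k+l\ge 16$, and only the handful of pairs with $k+l\in\{8,10,12,14\}$ need to be checked directly. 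This is a fixable but genuine omission; as written, your argument establishes the four identities but does not yet exclude all others.
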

\begin{proof}
See either Duke \cite{duke99} or Ghate \cite{ghate00}. Note that the normalization we use here is different than what Duke and Ghate use. Writing 
\[
E_k(z)=-\frac{2k}{B_{k,\one{1}}}E_k^{\one{1}, \one{1}}(z)= 1-\frac{2k}{B_k}\sum_{n=1}^\i \sigma_{k-1}(n)q^n
\]
we obtain the familiar identities
\[
E_8=E_4^2, \quad E_{10}=E_4 E_6, \quad E_{14}=E_4 E_{10} \quad \textrm{and} \quad E_{14}=E_6 E_8.
\]
For an elementary but less elegant proof than the one given in \cite{ghate00}, one can use \eqref{eqq20} and argue that  
\[
\frac{(k+l)B_k B_l }{l B_k+k B_l}=B_{k+l}
\]
with $l \geq k$, is only true for the pairs of weights 
\[
(k,l)\in \{(4,4),(4,6),(4,10),(6,8)\}.  \qedhere
\]
\end{proof}

\begin{prop} \label{PROP2}
Assume that $2|N$. There are no eigenform identities of the form \eqref{eqq18}.
\end{prop}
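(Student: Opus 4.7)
The plan is to exploit the triviality that a Dirichlet character modulo $N$ vanishes on integers sharing a factor with $N$. Since we are assuming $2 \mid N$ and both $\psi$ and $\vp$ are defined modulo $N$, we immediately get $\psi(2) = \vp(2) = 0$, hence
\[
\A_2 = 2^{k-1}\psi(2) = 0 \quad \text{and} \quad \B_2 = 2^{l-1}\vp(2) = 0.
\]

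With this information in hand, I would substitute directly into the relation \eqref{eqq21}, which came from equating the $q^2$-coefficients of $f(z)g(z)$ and $(\a+\b)E_{k+l}^{\one{1},\psi\vp}(z)$:
\[
2\A_2\B_2(\a+\b) = 1 + \a\B_2 + \b\A_2.
\]
The left-hand side vanishes identically because it contains the factor $\A_2 \B_2 = 0$, while every term on the right-hand side except the constant $1$ also vanishes for the same reason. Thus the identity \eqref{eqq18} would force $0 = 1$, a contradiction. Therefore no such product identity can exist when $2 \mid N$.

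There is essentially no obstacle: the whole proof is the observation that the $q^2$-coefficient equation degenerates in a visibly impossible way as soon as $\psi$ and $\vp$ both kill $2$. The more interesting work has already been done in deriving \eqref{eqq21} from the Fourier expansions of $f$, $g$, and $(\a+\b)E_{k+l}^{\one{1},\psi\vp}$; here we simply read off the consequence. I would keep the written proof to a few lines, noting the vanishing of $\A_2$ and $\B_2$ and then invoking \eqref{eqq21}.
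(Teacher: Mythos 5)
Your proof is correct and is exactly the paper's argument: the paper also observes that $2\mid N$ forces $\A_2=\B_2=0$, whereupon \eqref{eqq21} collapses to $0=1$. You have merely spelled out the contradiction in slightly more detail.
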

\begin{proof}
If $2|N$, then $ \A_2= \B_2=0$, and $\eqref{eqq21}$ would result in a contradiction.
\end{proof}
For the remainder of this section we assume that the level $N$ is odd. We have
\[
| \A_2|=2^{k-1} \quad \textrm{and} \quad  | \B_2|=2^{l-1}.
\]
Thus equation \eqref{eqq25} gives us a straightforward way to determine if an identity is possible for a fixed pair $(l,\vp)$. Observe that \eqref{eqq25} must have a root $\A_2$ satisfying $|\A_2|=2^{k-1}$ in order for an identity to exist with $g=E_l^{\one{1},\. \vp}$. We substitute the values
\[
\b=-\frac{B_{l,\vp}}{2l}, \quad  \B_2=2^{l-1}\vp(2) \quad \textrm{and} \quad  \B_3=3^{l-1}\vp(3)
\]
into \eqref{eqq25}, and determine if there are any valid roots. Since an approximation is sufficient, this computation is straightforward. If $\eqref{eqq25}$ has no valid roots, there are no identities arising from the pair $(l,\vp)$. If we find a valid root $ \A_2$, then the values of $\a$ and $ \A_3$ are determined by \eqref{eqq20} and \eqref{eqq21}, respectively. Further values of $ \A_p$ are uniquely determined by equating further coefficients of $\eqref{eqq17}$ with $\eqref{eqq19}$. Thus the Fourier expansion of $f$ will be uniquely determined. With this expansion known we can determine if the expansion of $f$ corresponds to that of an eigenform.

\begin{prop} \label{PROP3}
Assume that $3|N$. There is only one eigenform identity of the form \eqref{eqq18}, given below:
\begin{equation}
E_5^{\one{1},\. \ch{3}}=-36E_2^{\one{1},\. \one{3}} \cdot E_3^{\one{1},\. \ch{3}}. \label{eqq26}
\end{equation}
\end{prop}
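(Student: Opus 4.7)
Since $3\mid N$ and $\vp$ is a Dirichlet character modulo $N$, we have $\vp(3)=0$ and hence $\B_3=0$. I would substitute this into \eqref{eqq25}, reducing it to a cubic in $\A_2$ whose coefficients depend only on $\b$ and $\B_2$; the leading coefficient becomes $j_3=2\b^2\B_2(2\B_2-1)$. Proposition \ref{PROP2} forces $N$ odd, so $|\B_2|=2^{l-1}\geq 2$; and the parity constraint $\vp(-1)=(-1)^l$ together with $l\geq 2$ forces $B_{l,\vp}\neq 0$ and hence $\b\neq 0$. The cubic is therefore non-degenerate, and an identity with these $(l,\vp)$ can exist only if one of its roots $\A_2$ has absolute value $2^{k-1}$ for some integer $k$ with $2\leq k\leq l$.

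Following the recipe stated just before the proposition, for each candidate $(l,\vp)$ I substitute the explicit values $\b=-B_{l,\vp}/(2l)$ and $\B_2=2^{l-1}\vp(2)$ into the simplified cubic and solve numerically. For each admissible root, equations \eqref{eqq20} and \eqref{eqq21} determine $\a$ and $\A_3$, and equating further coefficients of $q^m$ in \eqref{eqq17} against those in \eqref{eqq19} successively determines the remaining $\A_p$. This yields the full Fourier expansion of $f$; I compare it against $E_k^{\one{1},\.\psi}$ for the corresponding $(k,\psi)$ and verify any resulting identity via Corollary \ref{corr1}.

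To keep the search finite, I would bound $|\b|$ from below using Proposition \ref{lemm1}, which gives $|\b|\geq C(l,f_\vp)$ with growth rate $l!(2\pi)^{-l}f_\vp^{l-1/2}$, vastly dominating $|\B_2|=2^{l-1}$. Dividing the cubic by $\b^2$ and letting $|\b|\to\infty$ shows that its three roots approach the roots of $\A_2(2\A_2-1)(\A_2-\B_2)=0$, namely $0$, $1/2$, and $\B_2$. The asymptote at $0$ is incompatible with $\A_2\neq 0$ (as $N$ is odd), the value $1/2$ is not an algebraic integer, and the asymptote at $\B_2=\pm 2^{l-1}$ forces $k=l$. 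A quantitative perturbation estimate should rule out this last possibility once $l$ or $f_\vp$ is sufficiently large, leaving only finitely many candidate pairs. Enumerating these in Pari-gp, the unique surviving case is $(l,\vp)=(3,\ch{3})$ paired with $(k,\psi)=(2,\one{3})$, producing the identity \eqref{eqq26}.

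The principal technical obstacle is making the perturbation bound fully explicit, so that the candidate list is provably finite while also handling the $k=l$ branch where the root is genuinely close in magnitude to $\B_2$. Once that bound is in place, the remaining verifications (solving the cubic, reading off $\a$, $\A_3$, and the subsequent $\A_p$, and applying Corollary \ref{corr1}) are routine finite computations.
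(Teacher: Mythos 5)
Your reduction to the cubic \eqref{eqq25} with $\B_3=0$ is a valid necessary condition, but the argument has a genuine gap at exactly the point you flag. The statement that the three roots tend to $0$, $1/2$ and $\B_2$ as $|\b|\to\infty$ is only a heuristic until you supply an explicit perturbation bound, and that limit is taken with $\B_2$ held fixed even though $|\B_2|=2^{l-1}$ grows with $l$, so the two parameters are coupled and finiteness of the candidate list is not actually established. Worse, the branch where a root has absolute value near $|\B_2|$, i.e.\ $k=l$, is not ruled out at all, and you cannot defer it to Propositions \ref{PROP4} and \ref{PROP5}: those are proved under the standing assumption that $N$ is prime to $2$ and $3$, so the equal-weight case with $3\mid N$ must be settled inside this proposition. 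As written, the proposal is a search plan with an acknowledged hole, not a proof.

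The root of the difficulty is that you use only $\vp(3)=0$ and discard the equally available fact $\psi(3)=0$, i.e.\ $\A_3=0$; equation \eqref{eqq25} has already eliminated $\A_3$ as though it were an unknown to be solved for. The paper instead returns to the $q^3$ relation \eqref{eqq22}, which with $\A_3=\B_3=0$ collapses to $\A_2+\B_2=-2$, with no $\a$ or $\b$ present. Since $|\A_2|=2^{k-1}$ and $|\B_2|=2^{l-1}$, the only solutions are $\A_2=\B_2=-1$ (forcing $k=l=1$, excluded by $l\geq k>1$) and $\A_2=2$, $\B_2=-4$, forcing $(k,l)=(2,3)$. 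Substituting these into \eqref{eqq21} and \eqref{eqq23} pins down $\a=1/12$ and $\b=-1/9$, the remaining $\A_p$ and $\B_p$ are determined by equating further coefficients of \eqref{eqq17} and \eqref{eqq19}, and the single identity \eqref{eqq26} is then verified by Corollary \ref{corr1}. This bypasses the asymptotics, the enumeration over $(l,\vp)$, and the unresolved $k=l$ branch entirely.
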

\begin{proof}
If $3$ divides the level, then $ \A_3= \B_3=0$, and $\eqref{eqq22}$ reduces to 
\begin{equation}
-2= \A_2+ \B_2. \label{eqq27}
\end{equation}
Since $| \A_2|$ and $| \B_2|$ are powers of $2$, there are only two solutions to $\eqref{eqq27}$. Either $ \A_2= \B_2=-1$ (which we exclude since we assume $l\geq k > 1$) or $ \A_2=2$ and $ \B_2=-4$, in which case $k=2$ and $l=3$. We set $ \A_2=2$ and $ \B_2=-4$ in equations $\eqref{eqq21}$ and $\eqref{eqq23}$ (along with $ \A_3= \B_3=0$). Solving for $\a$ and $\b$ yields $\a=1/12$ and $\b=-1/9$. Now observe that $\eqref{eqq26}$ is true by applying Corollary \ref{corr1}, and that $ \A_p$ and $ \B_p$ (for $p>3$ prime) are uniquely determined by equating further coefficients of $E_2^{\one{1},\. \psi} \cdot E_3^{\one{1},\. \vp}$ with  $(\frac{1}{12} -\frac{1}{9})  E_5^{\one{1},\. \psi\vp}$. Therefore $\eqref{eqq26}$ is the only identity when $3$ divides the level.
\end{proof}
For the remainder of the proof we now assume that the level $N$ is relatively prime to both $2$ and $3$ (and therefore that $f_\psi$ and $f_\vp$ are prime to $2$ and $3$ as well). We will make use of the fact that we can assume $f_{\vp}\geq 5$ if $\vp$ is not trivial (i.e., $\vp\neq \one{N}$).

\begin{prop} \label{PROP4}
Assume that $l=k\geq 3$. There are no eigenform identities of the form \eqref{eqq18}.
\end{prop}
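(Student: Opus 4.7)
With $l=k\geq 3$ and $N>1$ coprime to $6$, the constraint $|\A_2|=|\B_2|=2^{k-1}$ means that any root $\A_2$ of the cubic \eqref{eqq25} giving an eigenform identity must have absolute value exactly $2^{k-1}$. In addition, equation \eqref{eqq20} rewritten in terms of generalized Bernoulli numbers reads
\[
B_{2k,\psi\vp}\bigl(B_{k,\psi}+B_{k,\vp}\bigr)=2\,B_{k,\psi}B_{k,\vp}.
\]
My plan is to use this Bernoulli identity together with the size estimates from Proposition \ref{lemm1} to reduce to a finite list of candidate triples $(k,\psi,\vp)$, and then verify directly (in Pari-gp) that the cubic \eqref{eqq25} has no root of absolute value $2^{k-1}$ in each remaining case.

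For the size reduction, Proposition \ref{lemm1} (combined with \eqref{eqq6} for non-primitive characters) bounds $|B_{2k,\psi\vp}|$ from below by a constant times $(2k-1)!\,f_{\psi\vp}^{2k-1/2}(2\pi)^{-2k}$, while $|B_{k,\psi}B_{k,\vp}|$ is bounded from above by a constant times $((k-1)!)^2(f_\psi f_\vp)^{k-1/2}(2\pi)^{-2k}$. Since $f_{\psi\vp}$ divides $f_\psi f_\vp$ and $(2k-1)!$ grows much faster than $((k-1)!)^2$, the displayed identity forces $|B_{k,\psi}+B_{k,\vp}|$ to be extraordinarily small whenever $k$ or $\max(f_\psi,f_\vp)$ is large. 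A Galois-conjugation argument in the spirit of the proof of Proposition \ref{propp2} bounds this cancellation from below, leaving only finitely many candidate triples with $\psi(-1)=\vp(-1)=(-1)^k$ and $f_\psi,f_\vp$ coprime to $6$ (and $f_\vp\geq 5$ when $\vp$ is non-trivial).

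For each surviving candidate I would substitute $\b=-B_{k,\vp}/(2k)$, $\B_2=2^{k-1}\vp(2)$, and $\B_3=3^{k-1}\vp(3)$ into the cubic \eqref{eqq25}, compute its three roots, and verify that none has absolute value $2^{k-1}$. The main obstacle is the possibility of accidental near-cancellation in $|B_{k,\psi}+B_{k,\vp}|$---which occurs, for instance, when $\vp=\overline{\psi}$ with $B_{k,\psi}$ nearly purely imaginary---so handling these borderline cases cleanly may require an auxiliary argument using equation \eqref{eqq21} to pin down $\a+\b$ directly and comparing with the bounds on $|\a|+|\b|$ from Proposition \ref{lemm1}.
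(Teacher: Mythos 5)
Your route and the paper's are genuinely different, and the difference matters. The paper never touches \eqref{eqq20}, \eqref{eqq25}, or any archimedean estimate for this proposition: it takes the single coefficient relation \eqref{eqq21}, which for $l=k$ becomes \eqref{eqq28}, and observes that its left-hand side is $\equiv 0 \Mod{2}$ once $k\geq 3$ --- using the integrality \eqref{eqq8} (with $f_\psi$, $f_\vp$ odd) when the characters are non-trivial, and von Staudt--Clausen for $\psi=\one{N}$ --- while the right-hand side is $1$. That is a uniform $2$-adic contradiction covering every weight, level and character at once, with no finite list and no computation.

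The gap in your proposal is the finiteness reduction itself, and it sits exactly where you suspect. The claim that $B_{2k,\psi\vp}(B_{k,\psi}+B_{k,\vp})=2B_{k,\psi}B_{k,\vp}$ forces $|B_{k,\psi}+B_{k,\vp}|$ to be extraordinarily small for large $k$ or large conductor is not justified when $f_{\psi\vp}$ is much smaller than $f_\psi f_\vp$, e.g.\ when $\vp=\overline{\psi}$ and $\psi\vp=\one{N}$. There the lower bound on $|B_{2k,\psi\vp}|$ from Proposition \ref{lemm1} together with the Euler factors of \eqref{eqq6} contributes roughly $(2k)!\,f_\psi^{2k-1}(2\pi)^{-2k}$, while $|B_{k,\psi}B_{k,\overline{\psi}}|$ is of size $(k!)^2 f_\psi^{2k-1}(2\pi)^{-2k}$: the conductor powers cancel, and the ``forced'' value of $|B_{k,\psi}+B_{k,\overline{\psi}}|$ is about $4(k!)^2/(2k)!$ up to zeta factors, \emph{independent} of $f_\psi$. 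The only lower bound available for a nonzero sum, via the norm of the algebraic integer $f_\psi(B_{k,\psi}+B_{k,\overline{\psi}})/k$ supplied by \eqref{eqq8}, lets you conclude the sum vanishes only when every conjugate lies below roughly $k/f_\psi$, and for $k=3,4,5$ these two bounds cross already at single- or double-digit conductors. So arbitrarily large $f_\psi$ survive your sieve and you are not left with a finite computation; the ``auxiliary argument'' you defer is not a borderline patch but the heart of the proof. The paper's $2$-adic congruence via \eqref{eqq21} is one clean way to supply it, and it makes the root-finding in \eqref{eqq25} unnecessary for this case.
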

\begin{proof}
We first assume that both $\psi$ and $\vp$ are non-trivial characters. Given $\a=-B_{k,\psi}/(2k)$ and $\b=-B_{l,\vp}/(2l)$, we have from \eqref{eqq8} that 
\[
4\a\equiv 4\b \equiv 0 \Mod{2}.
\]
From $\eqref{eqq21}$ we have
\begin{equation}
2^{2k-1}\psi(2)\vp(2)(\a+\b)-2^{k-1}(\vp(2)\a+\psi(2)\b)=1. \label{eqq28}
\end{equation}
Since $k\geq 3$, the left side of $\eqref{eqq28}$ is equivalent to $0$ mod $2$, which results in a contradiction. Thus there are no identities involving non-trivial characters.  If $\psi$ (or $\vp)$ is the trivial character, say $\psi=\one{N}$, then $k$ must be even and
\[
\a=-\frac{B_k}{2k}\prod_{p\,|N}(1-p^{k-1}).
\]
We define $\ord_2(n)$ to be the largest integer $e$ such that $2^e$ divides $n$. We want to show that $\ord_2(2^{k-1} \a) \geq 1$. Observe that $B_k$ will always have exactly one factor of $2$ in the denominator (from the von Staudt-Clausen theorem) and $1-p^{k-1}$ will be even since $p \neq 2$. Therefore $\ord_2(2^{k-1} \a) \geq k-2-\ord_2(k)$. For $k\geq 3$ we have $k-2-\ord_2(k)\geq 1.$ So again the left side of \eqref{eqq28} is $0$ mod $2$, yielding a contradiction. Thus there are no identities.
\end{proof}

\begin{prop} \label{PROP5}
Assume that $l=k=2$. There is only one eigenform identity of the form \eqref{eqq18}, given below:
\begin{equation}
E_4^{\one{1},\. \ch{5,2}}=-30E_2^{\one{1},\. \one{5}} \cdot E_2^{\one{1},\. \ch{5,2}}. \label{eqq29}
\end{equation}
\end{prop}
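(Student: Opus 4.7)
The approach is the one outlined just before Proposition~\ref{PROP3}. With $k=l=2$, equation \eqref{eqq25} is a cubic in $\A_2$ whose coefficients $j_0,j_1,j_2,j_3$ depend only on $\beta = -B_{2,\vp}/4$, $\B_2 = 2\vp(2)$, and $\B_3 = 3\vp(3)$. A necessary condition for an eigenform identity \eqref{eqq18} to exist is that this cubic admits a root with $|\A_2|=2^{k-1}=2$. So I would run through the candidates $(l,\vp)=(2,\vp)$ and test.

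The first task is to reduce to a finite list of $\vp$. The character $\vp$ is even (since $\vp(-1)=(-1)^l=1$) and its conductor $f_\vp$ is coprime to $6$ by the standing hypothesis. For primitive non-trivial $\vp$, Proposition~\ref{lemm1} forces $|\beta|\geq C(2,f_\vp)$ which grows like $f_\vp^{3/2}$, while $|\B_2|=2$ and $|\B_3|=3$ stay bounded. Dividing \eqref{eqq25} through by $\beta^2$ and letting $f_\vp\to\infty$, two roots approach zeros of a polynomial with coefficients bounded in $\B_2,\B_3$, and the third tends to zero. Since $(\B_2,\B_3)=(2\vp(2),3\vp(3))$ ranges over a discrete set of roots of unity scaled by $2$ and $3$, requiring a root of modulus exactly $2$ confines $f_\vp$ below an explicit absolute bound. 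A parallel bound on non-primitive $\vp=\one{N}$ comes from \eqref{eqq6} combined with Table~\ref{table2}.

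For each surviving $\vp$ I would substitute $\beta,\B_2,\B_3$ into \eqref{eqq25} and solve the cubic numerically, keeping only roots of modulus $2$. The unique survivor will be $\vp=\ch{5,2}$, where $\beta=-1/5$, $\B_2=-2$, $\B_3=-3$, and \eqref{eqq25} yields the valid root $\A_2=2$, hence $\psi(2)=1$. Equation \eqref{eqq20} then gives $\alpha=1/6$, and matching $-B_{2,\psi}/4=1/6$ against Table~\ref{table2} forces $\psi=\one{5}$. Further coefficients $\A_p$ are determined uniquely by equating terms of \eqref{eqq17} with those of \eqref{eqq19}, and all are consistent with $\psi=\one{5}$. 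Finally, the candidate identity \eqref{eqq29} is verified by comparing Fourier expansions up to the Sturm bound via Corollary~\ref{corr1}, a routine Pari-gp computation.

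The main obstacle is the finiteness step. The leading coefficient $j_3$ of \eqref{eqq25} itself scales like $|\beta|^2$, so the naive product-of-roots estimate does not blow up, and one must carry out a careful degeneration analysis of the cubic as $|\beta|\to\infty$ to rule out accidental roots of modulus $2$ at arbitrarily large conductor. A 2-adic constraint derived from \eqref{eqq21}, rewritten as $2\vp(2)\alpha(4\psi(2)-1)+2\psi(2)\beta(4\vp(2)-1)=1$, can be used in tandem to sharpen this bound. Once the conductor is bounded explicitly, everything remaining is a finite case check.
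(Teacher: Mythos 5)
Your overall strategy --- reduce to the cubic \eqref{eqq25}, bound the conductor of $\vp$, and finish with a finite check --- is not the route the paper takes for $l=k=2$, and the step you yourself flag as the ``main obstacle'' is a genuine gap. The difficulty is worse than you suggest: after dividing \eqref{eqq25} by $\b^2$, the limiting polynomial depends on $\B_2=2\vp(2)$ and $\B_3=3\vp(3)$, and since $\vp(2),\vp(3)$ range over \emph{all} roots of unity (dense in the unit circle) as $\vp$ varies, you would need a bound on the distance of the limit roots from the circle $|\A_2|=2$ that is uniform in $(\vp(2),\vp(3))$, together with an explicit quantitative perturbation estimate, before any finite list of conductors emerges. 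You have not shown such a uniform separation exists, and nothing in the paper's toolkit (in particular Proposition \ref{propp2}, which does not apply here since neither $f$ nor $g$ is normalized to have constant term $1$ in Case \ref{case2}) supplies it. As written, the finiteness of the case check --- the heart of the proposition --- is asserted, not proved.

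The paper avoids the conductor bound entirely. The ``2-adic constraint'' you relegate to a supporting role is the whole argument: specializing \eqref{eqq21} to $k=l=2$ gives
\[
2\a\vp(2)(4\psi(2)-1)+2\b\psi(2)(4\vp(2)-1)=1,
\]
and when $\psi,\vp$ are both non-trivial (hence even, with conductor prime to $6$), \eqref{eqq9} gives $2\a\equiv 2\b\equiv 0\Mod{2}$, so the left side is $\equiv 0\Mod{2}$ --- a contradiction for \emph{every} non-trivial pair at once, with no analysis of \eqref{eqq25} and no bound on $f_\vp$. That leaves only the cases where at least one character is trivial; there $\a=-\tfrac{1}{24}\prod_{p|N}(1-p)$, the same congruence forces $\vp(2)=\pm1$, and a short computation with \eqref{eqq25} pins down $\a\in\{1/6,\,11/6,\,23/52\}$, of which only $\a=1/6$ survives, giving \eqref{eqq29}. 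If you want to rescue your version, you should promote the congruence argument from a ``sharpening tool'' to the main engine; otherwise you must actually carry out, with explicit constants, the degeneration analysis you only sketch.
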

\begin{proof}
From \eqref{eqq21} we have 
\begin{equation}
2 \a\vp(2) ( 4 \psi(2)-1) +2 \b\psi(2) ( 4 \vp(2)-1)=1. \label{eqq30}
\end{equation}
First we assume that $\psi$ and $\vp$ are non-trivial. Since both characters must be even we have from \eqref{eqq9} that $2\a \equiv 2\b \equiv 0 \Mod{2}$. Therefore the left side of \eqref{eqq30} is $0$ mod $2$, and we have a contradiction. Now we assume that exactly one of either $\psi$ or $\vp$ is the trivial character. Without loss of generality we take $\psi=\one{N}$. Using
\[
\a=-\frac{1}{24}\prod_{p\,|N}(1-p)
\]
and \eqref{eqq30} we obtain
\[
-\vp(2)\prod_{p\,|N}(1-p)=4+8\b( 1-4\vp(2)).
\]
Therefore $M\vp(2) \equiv 4 \Mod{8}$ for some $M\in \Z$, and thus $\vp(2) =\pm 1$. If we take $\vp(2)=1$, then \eqref{eqq25} yields $\a=11/6$. If we take $\vp(2)=-1$ then \eqref{eqq25} yields $\a=1/6$ or $\a=23/52$. Observe that $23/52$ cannot be the value of $\a$. If $\a=11/6$ then $N=2\cdot 3 \cdot 23$, which is excluded. The value $\a=1/6$ corresponds to the identity \eqref{eqq29}. If both $\psi$ and $\vp$ are trivial, $\psi=\vp=\one{N}$, then \eqref{eqq30} yields $\a=1/12$, which implies that $N=3$. Thus there are no further identities.
\end{proof}

For the remainder of this proof we now assume that $k$ is strictly less than $l$. Recall that on page \pageref{tagg3} we defined
\[
C(l,f_\vp)=\frac{\zeta(2l)(l-1)!f_\vp^{l-1/2}}{\zeta(l)(2\pi)^l}. 
\]
We will now proceed to define the function $D(k,l)$ so that
\[
C(l,f_{\vp}) \leq \left|\frac{B_{l,\vp_0}}{2l}\right| \leq \left|\frac{B_{l,\vp}}{2l}\right| \leq D(k,l).
\]
This allows us to exclude all cases where $C(l,f_\vp)>D(k,l)$. We want to obtain an equation that determines $\b$ in terms of $ \A_p$'s and $ \B_p$'s. To do so, we combine $\eqref{eqq21}$ and $\eqref{eqq24}$ to eliminate $\a$. Upon dividing by $ \B_2$ (noting that $ \B_2$ is non-zero) we have
\[
c_1\b=c_0
\]
where
\[
c_1=8 \A_2^3 \B_2^3-2 \A_2 \B_2^3-8 \A_2^4 \B_2^2+ \A_2 \B_2^2+2 \A_2^4- \A_2^3
\]
and
\[
c_0=d_3 \A_2^3+d_2 \A_2^2+d_1 \A_2+d_0,
\]
with
\begin{align*}
d_3&=-6 \B_2^2 + 2 \B_2 + 2,\\
d_2&= \B_2^2 + 2 \A_3 \B_2 + 2 \B_2 \B_3 + 3 \B_2 + 2 \A_3 +2 \B_3 + 3,\\
d_1&= \B_2^2 +  \A_3 \B_2 +  \B_2 \B_3 + 2 \B_2 + 2 \A_3 \B_5 +  \A_3 + 2 \A_5 \B_3 \\
& \qquad +2 \A_5 + 2 \A_7 +  \B_3 + 2 \B_5 + 2 \B_7 + 6,  \\
d_0&=- \A_3 \B_2 - \B_2 \B_3 - 2 \B_2 - \A_3 \B_5 -  \A_3- \A_5 \B_3 \\
& \qquad -  \A_5 - \A_7 - \B_3 - \B_5 - \B_7 - 4.
\end{align*}
We write $\b=c_0/c_1$. We want an upper bound for $|\b|$. In order to determine such a bound, we obtain a lower bound on $c_1$ and an upper bound on $c_0$. Recall that $| \A_p|=p^{k-1}$ and $| \B_p|=p^{l-1}$. Using the reverse triangle inequality on $c_1$ we obtain
\[
|c_1|\geq 8| \A_2^3 \B_2^3|-2| \A_2 \B_2^3|-8| \A_2^4 \B_2^2|-| \A_2 \B_2^2|-2| \A_2^4|-| \A_2^3|.
\]
Therefore we have
\[
|c_1| \geq 2^{k-3}(8^l 4^k-8^k 4^l-8^l-8^k-4^l-4^k).
\]
Note that
\[ 8^l 4^k-8^k 4^l-8^l-8^k-4^l-4^k>0\] 
when $l>k>1$. We define $e_i$ as follows,
\begin{align*}
e_3&=6 \cdot 2^{2l-2}+2^{l}+2,\\
e_2&=2^{2l-2}+3^{k-1}2^l+3^{l-1}2^l+3 \cdot 2^{l-1}+2 \cdot 3^{k-1}+2 \cdot 3^{l-1}+3,\\
e_1&=2^{2l-2}+3^{k-1}2^{l-1}+2^{l-1}3^{l-1}+2^l+2 \cdot 3^{k-1}5^{l-1}\\
 & \qquad +3^{k-1}+2 \cdot 5^{k-1}3^{l-1}+2 \cdot 5^{k-1}+2 \cdot 7^{k-1}\\
 & \qquad \qquad +3^{l-1}+2 \cdot 5^{l-1}+ 2 \cdot 7^{l-1}+6, \\
e_0&=3^{k-1}2^{l-1}+2^{l-1}3^{l-1}+2^{l}+3^{k-1}5^{l-1}+3^{k-1}+5^{k-1}3^{l-1}\\
& \qquad +5^{k-1}+7^{k-1}+3^{l-1}+5^{l-1}+7^{l-1}+4,  
\end{align*}
so that $|d_i| \leq e_i$ for $0\leq i \leq 3$. Therefore we have
\[
|c_0|\leq 2^{3k-3}e_3+2^{2k-2}e_2+2^{k-1}e_1+e_0. 
\]
We define
\[
D(k,l):=\frac{4^k e_3+2^{k+1}e_2+4e_1+2^{3-k}e_0}
{8^l 4^k-8^k 4^l-8^l-8^k-4^l-4^k},
\]
so that for $l>k>1$ we have
\[
|\b|=\left|\frac{B_{l,\vp}}{2l}\right| \leq D(k,l).
\]

\begin{prop} \label{PROP6}
Assume that $l\geq 5$ and $1<k<l$. There are no eigenform identities of the form \eqref{eqq18}.
\end{prop}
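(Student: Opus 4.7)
The plan is to exploit the bound chain
\[
C(l, f_\vp) \leq \left|\frac{B_{l,\vp_0}}{2l}\right| \leq \left|\frac{B_{l,\vp}}{2l}\right| = |\b| \leq D(k,l)
\]
established just above, reducing the infinite family of tuples $(k, l, \vp)$ to a finite list and then dispatching each surviving candidate by the computational strategy already developed for the preceding propositions.

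First I would compare the growth rates of $C$ and $D$. The lower bound $C(l, f_\vp)$ grows like $(l-1)! f_\vp^{l-1/2}/(2\pi)^l$, tending to infinity rapidly in both $l$ and $f_\vp$. In contrast, $D(k,l)$ has denominator of order $8^l 4^k$ and a numerator dominated, for $k$ much smaller than $l$, by the term $8 \cdot 7^{l-1}$ coming from $4 e_1$; hence $D(k,l) \to 0$ geometrically as $l$ grows with $k < l$. Consequently there is an explicit threshold $L_0$ beyond which $C(l, 1) > D(k,l)$ for every $1 < k < l$, eliminating all candidates with $l > L_0$ regardless of $\vp$. Since $N$ (and hence $f_\vp$) is coprime to $6$ and $C(l, f_\vp)$ is strictly increasing in $f_\vp$, for each remaining $l$ the inequality $C(l, f_\vp) > D(l-1, l)$ yields an explicit upper bound on $f_\vp$, leaving a finite list of pairs $(l, f_\vp)$.

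For each surviving triple $(k, l, \vp)$ with $1 < k < l$, $l \geq 5$, and $\vp$ a Dirichlet character of conductor $f_\vp$ satisfying $\vp(-1) = (-1)^l$, I would substitute the known values of $\b$, $\B_2 = 2^{l-1}\vp(2)$, and $\B_3 = 3^{l-1}\vp(3)$ into the cubic \eqref{eqq25}, approximate its three roots numerically, and discard any root whose modulus differs from $2^{k-1}$. If all three roots fail this magnitude test, no identity arises from that triple. For any surviving root, use \eqref{eqq20} and \eqref{eqq21} to solve for $\a$ and $\A_3$, then iteratively extract further $\A_p$ from higher-order coefficient comparisons between \eqref{eqq17} and \eqref{eqq19}, and check whether the resulting sequence is consistent with $\A_p = p^{k-1}\psi(p)$ for some Dirichlet character $\psi$ of modulus $N$. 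By Proposition \ref{propp1}, any eigenform candidate must equal $E_k^{\one{1},\. \psi}$ for this $\psi$, and Proposition \ref{propp2} imposes the additional constraint that $-2k/B_{k,\psi}$ be an algebraic integer, severely restricting the admissible $(k, \psi)$.

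The main obstacle is ensuring that $L_0$ and the induced bounds on $f_\vp$ come out small enough that the case analysis is actually tractable. The geometric decay of $D(k,l)$ against the factorial growth of $C$ should make $L_0$ quite modest and the total number of surviving triples small, so the remaining finite check can be carried out in Pari-gp along the same lines as the Theorem \ref{thmm5} proof. A minor technical point is that the cubic \eqref{eqq25} was obtained only after cancelling the factor $2\A_2 - 1$; this is harmless here since $|\A_2| = 2^{k-1} \geq 2$ for $k \geq 2$ forces $\A_2 \neq 1/2$.
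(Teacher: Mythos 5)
Your overall strategy---squeezing $|\b|$ between the lower bound $C(l,f_\vp)$ and the upper bound $D(k,l)$ and then finishing with a finite computation---is the same one the paper sets up, but the paper's execution is far sharper and your version as written does not close. The paper simply notes that $D(k,l)<1$ for \emph{every} $l\geq 5$ and $1<k<l$, while every admissible $\vp$ forces $|\b|>1$: if $\vp$ is non-trivial then $f_\vp\geq 5$ (the level is prime to $6$ at this stage) and $C(l,f_\vp)>1$ already for $l\geq 5$, and if $\vp=\one{N}$ then some prime $p\geq 5$ divides $N$ and \eqref{eqq6} gives $|\b|\geq C(l,1)\lp 5^{l-1}-1\rp>1$. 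So the whole proposition falls to the contradiction $1<|\b|\leq D(k,l)<1$, with no case analysis at all. Your proposed threshold $L_0$ with $C(l,1)>D(k,l)$ is, by contrast, well above $5$ (for instance $C(5,1)\approx 0.002$ while $D(2,5)\approx 0.27$, and the crossover for $k=2$ happens only around $l\approx 13$), so you would be left with a substantial range of weights to dispatch by computation.

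The genuine gap is in the claim that bounding $f_\vp$ leaves ``a finite list'' of cases in which you can ``substitute the known values of $\b$'' into \eqref{eqq25}. By \eqref{eqq6}, $\b=-B_{l,\vp}/(2l)$ is not determined by the conductor: it depends on every prime dividing the level $N$. In particular the case $f_\vp=1$, i.e.\ $\vp=\one{N}$, survives your test $C(l,1)>D(k,l)$ for all $5\leq l\leq L_0$ and ranges over infinitely many $N$ coprime to $6$, each producing a different $\b$; the same issue arises for imprimitive $\vp$ of any fixed conductor. So the procedure you describe does not terminate. The missing ingredient is exactly the imprimitivity factor the paper exploits: each prime $p\geq 5$ dividing $N$ but not $f_\vp$ multiplies $|B_{l,\vp_0}/(2l)|$ by $|1-\vp_0(p)p^{l-1}|\geq 5^{l-1}-1$, which for $l\geq 5$ already pushes $|\b|$ past $D(k,l)<1$. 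Once you add that observation you can in fact discard the computation entirely and recover the paper's two-line argument.
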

\begin{proof}
When $l\geq 5$, we have $D(k,l)<1$ for any $1 < k < l$, and therefore $|\b|<1$. If $\vp \neq \one{N}$, then $f_{\vp}\geq 5$. We have $C(l,f_{\vp})>1$ when $l \geq 5$ and $f_{\vp}\geq 5$. Thus there are no eigenform identities when $\vp$ is non-trivial. If $\vp=\one{N}$, then there exists at least one prime $p\geq 5$ that divides $N$. From Proposition \ref{lemm1} and equation \eqref{eqq6} we have that $|\b| \geq C(l,1)(5^{l-1}-1)$. Observing that $C(l,1)(5^{l-1}-1)>1$ for $l \geq 5$, we determine there are no identities.
\end{proof}

\begin{prop} \label{PROP7}
Assume $l= 4$ and $k=2$ or $k=3$. There are no eigenform identities of the form \eqref{eqq18}.
\end{prop}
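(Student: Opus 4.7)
The plan is to leverage the bound $|\b|\leq D(k,l)$ derived just before the statement, together with the lower bound $C(l,f_\vp)\leq|B_{l,\vp}/(2l)|=|\b|$ coming from Proposition \ref{lemm1} (when $\vp$ is non-trivial) or the product formula \eqref{eqq6} (when $\vp=\one{N}$), to reduce the problem to a finite check for $l=4$, $k\in\{2,3\}$, and $N$ coprime to $6$. Then, for each surviving candidate $\vp$, I use equation \eqref{eqq25} to test whether there is a root $\A_2$ of the requisite absolute value $2^{k-1}$, and rule out the identity numerically in Pari-gp.

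First I would evaluate $D(2,4)$ and $D(3,4)$ explicitly. Both turn out to be small rational numbers (the denominator grows like $8^l4^k$ while the numerator is a modest polynomial in the $\A_p$- and $\B_p$-bounds), so the inequality $C(4,f_\vp)\leq D(k,4)$ leaves only a very short list of admissible conductors $f_\vp$ (recall $C(l,f_\vp)$ is strictly increasing in $f_\vp$ and that $f_\vp\geq 5$ whenever $\vp\neq\one{N}$ since $(f_\vp,6)=1$). For the trivial case $\vp=\one{N}$, the level $N$ must contain some prime $p\geq 5$, and \eqref{eqq6} forces
\[
|\b|\;\geq\;\frac{|B_4|}{8}\prod_{p\mid N}|1-p^3|\;\geq\;\frac{|B_4|}{8}\cdot(p^3-1),
\]
which, combined with $|\b|\leq D(k,4)$, reduces $N$ to a short enumerable list.

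Next, for each remaining candidate $\vp$, I would substitute the explicit values of $\b=-B_{4,\vp}/8$, $\B_2=8\vp(2)$, and $\B_3=27\vp(3)$ into the cubic \eqref{eqq25} and approximate its three roots. An eigenform identity can arise only if one of the roots satisfies $|\A_2|=2^{k-1}$ (that is, $|\A_2|=2$ for $k=2$ or $|\A_2|=4$ for $k=3$); for each of the (few) remaining pairs this will fail by direct numerical inspection. In the unlikely event a root of the correct modulus appears, I would determine $\a$ via \eqref{eqq20} and $\A_3$ via \eqref{eqq21}, force the remaining $\A_p$ by equating coefficients of \eqref{eqq17} and \eqref{eqq19}, and then verify that the resulting expansion does not match any genuine eigenform $E_k^{\one{1},\,\psi}$ of level $N$ (or invoke Corollary \ref{corr1} against a concrete candidate).

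The main obstacle is not conceptual but bookkeeping: the coefficients $j_0,j_1,j_2,j_3$ in \eqref{eqq25} depend on $\vp$ through $\b$ and the character values at $2,3$, so the cubic must be re-solved for every surviving $\vp$. However, Steps 1 and 2 cut the list of candidates to only a handful of pairs $(4,\vp)$, so the entire case analysis is a brief Pari-gp computation that produces no valid $\A_2$, establishing the claim.
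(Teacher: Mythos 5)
Your proposal follows essentially the same route as the paper: bound $|\b|$ above by $D(k,4)$ and below by $C(4,f_\vp)$ to force $\vp=\one{N}$, then use the Euler-factor lower bound to reduce to $\vp=\one{5}$, and finally substitute $\b=-31/60$, $\B_2=8$, $\B_3=27$ into \eqref{eqq25} and observe that the only root of $2$-power modulus is $\A_2=8$, which corresponds to the excluded weight $k=4$. The argument is correct and matches the paper's proof.
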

\begin{proof}
We have $D(2,4) < 0.53$ and $D(3,4) < 0.49$, and that $C(4,f_{\vp})>0.99$ for $f_{\vp}\geq 5$. So the only possibility for $\vp$ is $\one{N}$. Let $p$ be a prime dividing $N$. If $p\geq 7$ then $|\b|>C(4,1)(7^3-1)>1.22$. So there is only one case to consider, namely $\vp=\one{5}$. We have $\b=-31/60$, $\B_2=8$ and $\B_3=27$. Using these values in \eqref{eqq25} we obtain
\[
24986 \A_2^4 - 542765 \A_2^3 + 3050832 \A_2^2 - 2533840 \A_2 + 570496=0.
\]
Since $| \A_2|=2^{k-1}$, there is only one valid root, namely $ \A_2=8$. This implies that $k=4$, which we have excluded. Thus there are no identities.
\end{proof}

\begin{prop} \label{PROP8}
Assume that $l= 3$ and $k=2$. There are no eigenform identities of the form \eqref{eqq18}.
\end{prop}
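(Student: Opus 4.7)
The strategy is identical in spirit to Proposition~\ref{PROP7}: compute the explicit bound $D(2,3)$, use the inequality $C(3,f_\vp)\leq |\b|\leq D(2,3)$ to restrict $f_\vp$ to a short finite list, and then for each surviving $\vp$ substitute the known values of $\b$, $\B_2$, and $\B_3$ into the cubic \eqref{eqq25} in $\A_2$ and inspect whether any root satisfies $|\A_2|=2$.

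A direct evaluation of the formulas for $e_0,\ldots,e_3$ gives $D(2,3)=4756/3440<1.39$. Since $l=3$ is odd, the Nebentypus $\vp$ must satisfy $\vp(-1)=-1$, so the trivial character $\one{N}$ is excluded at the outset; this is a simplification not available in Proposition~\ref{PROP7}. For any non-trivial $\vp$ one has $f_\vp\geq 5$, and $C(3,f_\vp)\leq D(2,3)$ forces $f_\vp\leq 10$. Since the level (and hence $f_\vp$) is coprime to $6$, this leaves $f_\vp\in\{5,7\}$. Extending $\vp$ non-primitively by an additional prime $p\geq 5$ dividing $N$ multiplies $|\b|$ by $|1-\vp_0(p)p^2|\geq p^2-1\geq 24$ via \eqref{eqq6}, which combined with the primitive estimates easily exceeds $D(2,3)$; thus only the five primitive odd characters $\ch{5,1}$, $\ch{5,3}$, $\ch{7,1}$, $\ch{7,3}$, and $\ch{7,5}$ need to be examined.

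For each of these five candidates I would substitute $\b=-B_{3,\vp}/6$, $\B_2=4\vp(2)$, and $\B_3=9\vp(3)$ into \eqref{eqq25} to obtain a concrete cubic in $\A_2$, and compute its roots numerically in Pari-gp. A valid identity requires $\A_2=2\psi(2)$ with $\psi$ an even Dirichlet character, hence in particular $|\A_2|=2$; if none of the five cubics admits such a root, the proposition follows. If some candidate root survives the absolute-value test, the values of $\a$ and $\A_3$ are then forced by \eqref{eqq20} and \eqref{eqq21}, the higher $\A_p$ are determined inductively by equating further coefficients of \eqref{eqq17} with \eqref{eqq19}, and the resulting expansion would either match a genuine $E_2^{\one{1},\psi}$ or yield an immediate contradiction.

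The main obstacle is not the numerical check itself but confirming that the enumeration of $\vp$ is complete. The value $\b=-B_{3,\vp}/6$ depends on $\vp$ viewed as a function on $\Z$, not just on the underlying primitive character, so in principle $\vp$ must be considered at every level $N$ with $f_\vp\mid N$ and $(N,6)=1$. The two bounds above were designed precisely to handle this: $|\b|$ is minimized at the primitive presentation and strictly inflated otherwise, so once the primitive case is ruled out the remaining non-primitive presentations are ruled out \emph{a fortiori}.
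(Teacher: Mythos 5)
Your proposal is correct and follows essentially the same route as the paper: bound $D(2,3)$, use the odd parity of $l=3$ to discard $\vp=\one{N}$, compare with $C(3,f_\vp)$ and rule out non-primitive presentations to reduce to the five odd primitive characters modulo $5$ and $7$, and then test \eqref{eqq25} for roots of the correct absolute value (the paper finds roots $\A_2=\pm i$ for the characters mod $5$, which your criterion $|\A_2|=2$ rejects just as the paper rejects them for giving $k=1$). One small arithmetic slip: the numerator of $D(2,3)$ is $4^2\cdot 106+2^3\cdot 151+4\cdot 502+2\cdot 278=5468$, not $4756$, so $D(2,3)<1.59$ rather than $<1.39$; this is harmless, since after intersecting with the coprimality-to-$6$ condition the surviving conductors are still exactly $f_\vp\in\{5,7\}$.
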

\begin{proof}
We proceed along the same lines as in Proposition \ref{PROP7}. We have $D(2,3) < 1.59$. Since $l$ is odd, $\vp \neq \one{N}$. We have $C(3,f_{\vp})>1.64$ for $f_{\vp}\geq 9$. Therefore $f_{\vp}$ must be either 5 or 7. If $\vp$ is not primitive, then there is a prime $p \geq 5$ dividing $N$ but not $f_{\vp}$. In this case we would have $|\b|>C(3,5)(5^2-1)> 9.15$. Therefore, we need only check the cases where $\vp$ is an odd primitive character modulo $5$ or $7$. When $\vp=\ch{5,1}$, $\eqref{eqq25}$ has one valid root, $ \A_2=i$. This corresponds to the identity 
\[
E_4^{\one{1},\. \ch{5,2}}=(-5+5i)E_1^{\one{1},\. \ch{5,1}} \cdot E_3^{\one{1},\. \ch{5,1}},
\]
which we exclude since $k=1$. When $\vp=\ch{5,3}$, $\eqref{eqq25}$ again has only one valid root, $ \A_2=-i$. This corresponds to the excluded conjugate identity 
\[E_4^{\one{1},\. \ch{5,2}}=(-5-5i)E_1^{\one{1},\. \ch{5,3}} \cdot E_3^{\one{1},\. \ch{5,3}}.\] 
For the three cases where $\vp$ is an odd character modulo $7$, $\eqref{eqq25}$ has no valid roots. Thus there are no identities.
\end{proof}
Propositions \ref{PROP1} through \ref{PROP8} cover all possibilities for Case \ref{case2}, with $l \geq k>1$.

\section{Eigenform Product Identities} \label{secc7}
We briefly explain the notation used in Tables \ref{table6} and \ref{table8}. The Dirichlet characters $\chi_{p,j}$ are defined on page \pageref{tagg1}. The Eisenstein series $E_k^{\. \psi,\. \vp}$ is defined in equation \eqref{eqq4}. See page \pageref{tagg2} for the definition of the normalized cuspidal eigenforms $\Delta_{k,N}$, $\Delta_{k,N,\chi}$ and $\Phi_{k,N}$. Note that each occurrence of the function $\Phi_{k,N}$ denotes an eigenform product identity \emph{not} forced by dimensional considerations. Because $\Phi_{k,N}$ is not specified by $k$ and $N$ (unlike $\Delta_{k,N}$ and $\Delta_{k,N,\chi}$), we provide a partial Fourier expansion of each $\Phi_{k,N}$ used, for identification purposes. Note that $\Phi_{4,10}$ is the unique normalized newform in $\S_{4}(\G_0(10))$. The other $\Phi_{k,N}$ occur in spaces when the dimension of $\S_k^{{\textrm{new}}}(\G_0(N))$ is also greater than one.

\label{tagg4}
\begin{align*} 
\Phi_{10,3}(z) &= q + 18q^2 + 81q^3 - 188q^4 - 1530q^5 + 1458q^6 + 9128q^7 + \cdots  \\
\Phi_{8,5}(z)  &= q - 14q^2 - 48q^3 + 68q^4 + 125q^5 + 672q^6 - 1644q^7 + 840q^8 + \cdots \\
\Phi_{10,5}(z) &= q - 8q^2 - 114q^3 - 448q^4 - 625q^5 + 912q^6 + 4242q^7 + 7680q^8 + \cdots \\
\Phi_{6,7}(z)  &= q - 10q^2 - 14q^3 + 68q^4 - 56q^5 + 140q^6 - 49q^7 - 360q^8 + \cdots \\
\Phi_{8,7}(z)  &= q - 6q^2 - 42q^3 - 92q^4 - 84q^5 + 252q^6 + 343q^7 + 1320q^8 + \cdots \\
\Phi_{4,10}(z) &= q + 2q^2 - 8q^3 + 4q^4 + 5q^5 - 16q^6 - 4q^7 + 8q^8 + 37q^9 + 10q^{10} + \cdots \\
\Phi_{6,10}(z) &= q - 4q^2 - 26q^3 + 16q^4 - 25q^5 + 104q^6 - 22q^7 - 64q^8 + 433q^9 +  \cdots \\
\Phi_{4,15}(z) &= q + q^2 + 3q^3 - 7q^4 + 5q^5 + 3q^6 - 24q^7 - 15q^8 + 9q^9 + 5q^{10}+ \cdots \\
\Phi_{6,15}(z) &= q - 2q^2 - 9q^3 - 28q^4 - 25q^5 + 18q^6 - 132q^7 + 120q^8 + 81q^9+  \cdots \\ 
\Phi_{4,21}(z) &= q - 3q^2 - 3q^3 + q^4 - 18q^5 + 9q^6 + 7q^7 + 21q^8+ 9q^9+54q^{10}+ \cdots \\
\end{align*}

\begin{rmk} \label{remm2}
There are a few interesting observations to make regarding the identities in Table \ref{table6}. There are nine identities where the product of two Eisenstein series is again an Eisenstein series. There are 28 identities where the product of two Eisenstein series is a cusp form. And the remaining 24 identities are where the product of an Eisenstein series and a cusp form is a cusp form. One might think that the product of two Eisenstein series would not be a cusp form very often, but we find that this is the most frequent kind of identity. Of the nine instances of the product of two Eisenstein series being an Eisenstein series, six correspond to Case \ref{case2}, and three to Case \ref{case1}. Out of the 61 identities, 54 occur at square-free level, and seven at square levels ($N=4$, $8$ and $9$).

Note that while both $E_3^{\one{1},\. \ch{4}}$ and $\Delta_{5,4,\ch{4}}$ are eigenforms of level $4$, their product $\Delta_{8,2}$ is an eigenform of level $2$. We list the identity 
\[
\Delta_{8,2}=-4E_3^{\one{1},\. \ch{4}} \cdot \Delta_{5,4,\ch{4}}
\]
under the level $N=4$, since $4$ is the smallest level that all three forms exist at. Also of note is the fact that the three identities with the largest levels ($N=10$, $15$ and $21$) are not forced by dimensional considerations. 
\end{rmk}

\begin{table}[ht] 
\caption{Eigenform product identities} \label{table6}
\begin{tabular}{|c|c|c|l|}
\hline
Level & Weight & Nebentypus & \multicolumn{1} {c|} {Identity} \\
\hline
\hline
\rb $N=1$ & $k=8$ & $\chi=\one{1}$ & $E_8^{\one{1},\one{1}}=120E_4^{\one{1}, \one{1}} \cdot E_4^{\one{1}, \one{1}}$ \\
\hline
\rb $N=1$ & $k=10$ & $\chi=\one{1}$ & $E_{10}^{\one{1}, \one{1}}=5040/11 E_4^{\one{1}, \one{1}} \cdot E_6^{\one{1}, \one{1}}$ \\
\hline
\rb $N=1$ & $k=14$ & $\chi=\one{1}$ & $E_{14}^{\one{1}, \one{1}}=2640 E_4^{\one{1}, \one{1}} \cdot E_{10}^{\one{1}, \one{1}}$ \\
\hline
\rb $N=1$ & $k=14$ & $\chi=\one{1}$ & $E_{14}^{\one{1}, \one{1}}=10080 E_6^{\one{1}, \one{1}} \cdot E_8^{\one{1}, \one{1}}$ \\
\hline
\rb $N=1$ & $k=16$ & $\chi=\one{1}$ & $\Delta_{16,1}=240 E_4^{\one{1}, \one{1}} \cdot \Delta_{12,1}$ \\
\hline
\rb $N=1$ & $k=18$ & $\chi=\one{1}$ & $\Delta_{18,1}=-504 E_6^{\one{1}, \one{1}} \cdot \Delta_{12,1}$ \\
\hline
\rb $N=1$ & $k=20$ & $\chi=\one{1}$ & $\Delta_{20,1}=240 E_4^{\one{1}, \one{1}} \cdot \Delta_{16,1}$ \\
\hline
\rb $N=1$ & $k=20$ & $\chi=\one{1}$ & $\Delta_{20,1}=480 E_8^{\one{1}, \one{1}} \cdot \Delta_{12,1}$ \\
\hline
\rb $N=1$ & $k=22$ & $\chi=\one{1}$ & $\Delta_{22,1}=240 E_4^{\one{1}, \one{1}} \cdot \Delta_{18,1}$ \\
\hline
\rb $N=1$ & $k=22$ & $\chi=\one{1}$ & $\Delta_{22,1}=-504 E_6^{\one{1}, \one{1}} \cdot \Delta_{16,1}$ \\
\hline
\rb $N=1$ & $k=22$ & $\chi=\one{1}$ & $\Delta_{22,1}=-264 E_{10}^{\one{1}, \one{1}} \cdot \Delta_{12,1}$ \\
\hline
\rb $N=1$ & $k=26$ & $\chi=\one{1}$ & $\Delta_{26,1}=240 E_4^{\one{1}, \one{1}} \cdot \Delta_{22,1}$ \\
\hline
\rb $N=1$ & $k=26$ & $\chi=\one{1}$ & $\Delta_{26,1}=-504 E_6^{\one{1}, \one{1}} \cdot \Delta_{20,1}$ \\
\hline
\rb $N=1$ & $k=26$ & $\chi=\one{1}$ & $\Delta_{26,1}=480 E_8^{\one{1}, \one{1}} \cdot \Delta_{18,1}$ \\
\hline
\rb $N=1$ & $k=26$ & $\chi=\one{1}$ & $\Delta_{26,1}=-264 E_{10}^{\one{1}, \one{1}} \cdot \Delta_{16,1}$ \\
\hline
\rb $N=1$ & $k=26$ & $\chi=\one{1}$ & $\Delta_{26,1}= -24 E_{14}^{\one{1}, \one{1}} \cdot \Delta_{12,1}$ \\
\hline
\hline
\rb $N=2$ & $k=6$ & $\chi=\one{2}$ & $E_6^{\one{2}, \one{1}}=24E_2^{\one{1}, \one{2}} \cdot E_4^{\one{2}, \one{1}}$ \\
\hline
\rb $N=2$ & $k=10$ & $\chi=\one{2}$ & $\Delta_{10,2}=24E_2^{\one{1}, \one{2}}  \cdot \Delta_{8,2}$ \\
\hline
\hline
\rb $N=3$ & $k=5$ & $\chi=\ch{3}$ & $E_5^{\one{1},\, \ch{3}}=-36E_2^{\one{1}, \one{3}} \cdot E_3^{\one{1},\, \ch{3}}$ \\
\hline
\rb $N=3$ & $k=5$ & $\chi=\ch{3}$ & $E_5^{\ch{3}, \one{1}}=12E_2^{\one{1}, \one{3}}  \cdot E_3^{\, \ch{3}, \one{1}}$ \\
\hline
\rb $N=3$ & $k=6$ & $\chi=\one{3}$ & $\Delta_{6,3}=-9E_3^{\one{1},\, \ch{3}} \cdot E_3^{\, \ch{3}, \one{1}}$ \\
\hline
\rb $N=3$ & $k=7$ & $\chi=\ch{3}$ & $\Delta_{7,3,\ch{3}}=-9E_3^{\one{1},\, \ch{3}} \cdot E_4^{\one{3}, \one{1}}$ \\
\hline
\rb $N=3$ & $k=8$ & $\chi=\one{3}$ & $\Delta_{8,3}=-9E_3^{\one{1},\, \ch{3}} \cdot E_5^{\, \ch{3}, \one{1}}$ \\
\hline
\rb $N=3$ & $k=8$ & $\chi=\one{3}$ & $\Delta_{8,3}=3E_5^{\one{1},\, \ch{3}} \cdot E_3^{\, \ch{3}, \one{1}}$ \\
\hline
\rb $N=3$ & $k=8$ & $\chi=\one{3}$ & $\Delta_{8,3}=12E_2^{\one{1}, \one{3}}  \cdot \Delta_{6,3}$ \\
\hline
\rb $N=3$ & $k=10$ & $\chi=\one{3}$ & $\Phi_{10,3}=3E_5^{\one{1},\, \ch{3}} \cdot E_5^{\, \ch{3}, \one{1}}$ \\
\hline
\rb $N=3$ & $k=10$ & $\chi=\one{3}$ & $\Phi_{10,3}=12E_2^{\one{1}, \one{3}}  \cdot \Delta_{8,3}$ \\
\hline
\hline
\rb $N=4$ & $k=5$ & $\chi=\ch{4}$ & $\Delta_{5,4,\ch{4}}=-4E_3^{\one{1},\, \ch{4}} \cdot E_2^{\one{2}, \one{2}}$ \\
\hline
\rb $N=4$ & $k=6$ & $\chi=\one{2}$ & $\Delta_{6,4}=-4E_3^{\one{1},\, \ch{4}} \cdot E_3^{\, \ch{4}, \one{1}}$ \\
\hline
\rb $N=4$ & $k=8$ & $\chi=\one{2}$ & $\Delta_{8,2}=-4E_3^{\one{1},\, \ch{4}} \cdot \Delta_{5,4,\ch{4}}$ \\
\hline
\end{tabular}
\end{table}

\begin{table} [ht] \addtocounter{table}{-1}
\caption{Eigenform product identities (continued)} 
\begin{tabular}{|c|c|c|l|}
\hline
Level & Weight & Nebentypus & \multicolumn{1} {c|} {Identity}  \\
\hline
\hline
\rb $N=5$ & $k=4$ & $\chi=\one{5}$ & $\Delta_{4,5}=-5E_2^{\one{1},\, \ch{5,2}} \cdot E_2^{\, \ch{5,2}, \one{1}}$ \\
\hline
\rb $N=5$ & $k=4$ & $\chi=\ch{5,2}$ & $E_4^{\one{1},\, \ch{5,2}}=-30E_2^{\one{1}, \one{5}} \cdot E_2^{\one{1},\, \ch{5,2}} $ \\
\hline
\rb $N=5$ & $k=4$ & $\chi=\ch{5,2}$ & $E_4^{\, \ch{5,2}, \one{1}}=6E_2^{\one{1}, \one{5}}  \cdot E_2^{\, \ch{5,2}, \one{1}}$ \\
\hline
\rb $N=5$ & $k=5$ & $\chi=\ch{5,3}$ & $\Delta_{5,5,\ch{5,3}}=-5E_2^{\one{1},\, \ch{5,2}} \cdot E_3^{\, \ch{5,1}, \one{1}}$ \\
\hline
\rb $N=5$ & $k=5$ & $\chi=\ch{5,1}$ & $ \Delta_{5,5,\ch{5,1}}=-5E_2^{\one{1},\, \ch{5,2}} \cdot E_3^{\, \ch{5,3}, \one{1}}$ \\
\hline
\rb $N=5$ & $k=5$ & $\chi=\ch{5,3}$ & $ \Delta_{5,5,\ch{5,3}}=(-2+i)E_3^{\one{1},\, \ch{5,1}} \cdot E_2^{\, \ch{5,2}, \one{1}}$ \\
\hline
\rb $N=5$ & $k=5$ & $\chi=\ch{5,1}$ & $ \Delta_{5,5,\ch{5,1}}=(-2-i)E_3^{\one{1},\, \ch{5,3}} \cdot E_2^{\, \ch{5,2}, \one{1}}$ \\
\hline
\rb $N=5$ & $k=6$ & $\chi=\one{5}$ & $\Delta_{6,5}=6E_2^{\one{1}, \one{5}} \cdot \Delta_{4,5}$ \\
\hline
\rb $N=5$ & $k=6$ & $\chi=\one{5}$ & $ \Delta_{6,5}=-5E_2^{\one{1},\, \ch{5,2}} \cdot E_4^{\, \ch{5,2}, \one{1}}$ \\
\hline
\rb $N=5$ & $k=6$ & $\chi=\one{5}$ & $ \Delta_{6,5}=(-2+i)E_3^{\one{1},\, \ch{5,1}} \cdot E_3^{\, \ch{5,3}, \one{1}}$ \\
\hline
\rb $N=5$ & $k=6$ & $\chi=\one{5}$ & $ \Delta_{6,5}=(-2-i)E_3^{\one{1},\, \ch{5,3}} \cdot E_3^{\, \ch{5,1}, \one{1}}$ \\
\hline
\rb $N=5$ & $k=6$ & $\chi=\one{5}$ & $ \Delta_{6,5}=E_4^{\one{1},\, \ch{5,2}} \cdot E_2^{\, \ch{5,2}, \one{1}}$ \\
\hline
\rb $N=5$ & $k=10$ & $\chi=\one{5}$ & $\Phi_{10,5}=6E_2^{\one{1}, \one{5}} \cdot \Phi_{8,5}$ \\
\hline
\hline
\rb $N=6$ & $k=4$ & $\chi=\one{6}$ & $\Delta_{4,6}=-E_2^{\one{1},\, \ch{12}} \cdot E_2^{\, \ch{3},\, \ch{4}}$ \\
\hline
\hline
\rb $N=7$ & $k=4$ & $\chi=\one{7}$ & $\Delta_{4,7}=(-\z{6}-2)E_2^{\one{1},\, \ch{7,2}} \cdot E_2^{\, \ch{7,4}, \one{1}}$ \\
\hline
\rb $N=7$ & $k=4$ & $\chi=\one{7}$ & $\Delta_{4,7}=(\z{6}+3)E_2^{\one{1},\, \ch{7,4}} \cdot E_2^{\, \ch{7,2}, \one{1}}$ \\
\hline
\rb $N=7$ & $k=4$ & $\chi=\ch{7,4}$ & $\Delta_{4,7,\ch{7,4}}=(-\z{6}-2)E_2^{\one{1},\, \ch{7,2}} \cdot E_2^{\, \ch{7,2}, \one{1}}$ \\
\hline
\rb $N=7$ & $k=4$ & $\chi=\ch{7,2}$ & $\Delta_{4,7,\ch{7,2}}=(\z{6}+3)E_2^{\one{1},\, \ch{7,4}} \cdot E_2^{\, \ch{7,4}, \one{1}}$ \\
\hline
\rb $N=7$ & $k=5$ & $\chi=\ch{7,3}$ & $\Delta_{5,7,\ch{7,3}}=4E_2^{\one{1}, \one{7}} \cdot \Delta_{3,7,\ch{7,3}}$ \\
\hline
\rb $N=7$ & $k=5$ & $\chi=\ch{7,3}$ & $\Delta_{5,7,\ch{7,3}}=(-\z{6}-2)E_2^{\one{1}, \, \ch{7,2}} \cdot E_3^{\, \ch{7,1}, \one{1}}$ \\
\hline
\rb $N=7$ & $k=5$ & $\chi=\ch{7,3}$ & $\Delta_{5,7,\ch{7,3}}=(\z{6}+3)E_2^{\one{1},\, \ch{7,4}} \cdot E_3^{\, \ch{7,5}, \one{1}}$ \\
\hline
\rb $N=7$ & $k=5$ & $\chi=\ch{7,3}$ & $\Delta_{5,7,\ch{7,3}}=(\z{6}-1)E_3^{\one{1},\, \ch{7,1}} \cdot E_2^{\, \ch{7,2}, \one{1}}$ \\
\hline
\rb $N=7$ & $k=5$ & $\chi=\ch{7,3}$ & $\Delta_{5,7,\ch{7,3}}=-\z{6}E_3^{\one{1},\, \ch{7,5}} \cdot E_2^{\, \ch{7,4}, \one{1}}$ \\
\hline
\rb $N=7$ & $k=8$ & $\chi=\one{7}$ & $\Phi_{8,7}=4E_2^{\one{1}, \one{7}} \cdot \Phi_{6,7}$ \\
\hline
\hline
\rb $N=8$ & $k=4$ & $\chi=\one{2}$ & $\Delta_{4,8}=-2E_2^{\one{1},\, \ch{8,2}} \cdot E_2^{\, \ch{8,2}, \one{1}}$ \\
\hline
\rb $N=8$ & $k=5$ & $\chi=\ch{4}$ &  $\Delta_{5,4,\ch{4}}=-2E_2^{\one{1},\, \ch{8,2}} \cdot \Delta_{3,8,\ch{8,1}}$ \\
\hline
\hline
\rb $N=9$ & $k=4$ & $\chi=\one{3}$ & $\Delta_{4,9}=(\z{6}-2)E_2^{\one{1},\, \ch{9,2}} \cdot E_2^{\, \ch{9,4}, \one{1}}$ \\
\hline
\rb $N=9$ & $k=4$ & $\chi=\one{3}$ & $\Delta_{4,9}=(-\z{6}-1)E_2^{\one{1},\, \ch{9,4}} \cdot E_2^{\, \ch{9,2}, \one{1}}$ \\
\hline
\hline
\rb $N=10$ & $k=6$ & $\chi=\one{10}$ & $\Phi_{6,10}=-6E_2^{\one{1}, \one{10}} \cdot \Phi_{4,10}$ \\
\hline
\hline
\rb $N=15$ & $k=6$ & $\chi=\one{15}$ & $\Phi_{6,15}=-3E_2^{\one{1}, \one{15}} \cdot \Phi_{4,15}$ \\
\hline
\hline
\rb $N=21$ & $k=4$ & $\chi=\one{21}$ & $\Phi_{4,21}=-2E_2^{\one{1}, \one{21}} \cdot \Delta_{2,21}$ \\
\hline
\end{tabular}
\end{table}

\clearpage

\begin{table}[ht]
\caption{Identities with all weights greater than 2, $N>1$} \label{table8}
\begin{tabular}{|c|c|c|l|}
\hline
Level & Weight & Nebentypus & \multicolumn{1} {c|} {Identity}  \\
\hline
\hline
\rb $N=3$ & $k=6$ & $\chi=\one{3}$ & $\Delta_{6,3}=-9E_3^{\one{1},\, \ch{3}} \cdot E_3^{\, \ch{3}, \one{1}}$ \\
\hline
\rb $N=3$ & $k=7$ & $\chi=\ch{3}$ & $\Delta_{7,3,\ch{3}}=-9E_3^{\one{1},\, \ch{3}} \cdot E_4^{\one{3}, \one{1}}$ \\
\hline
\rb $N=3$ & $k=8$ & $\chi=\one{3}$ & $\Delta_{8,3}=-9E_3^{\one{1},\, \ch{3}} \cdot E_5^{\, \ch{3}, \one{1}}$ \\
\hline
\rb $N=3$ & $k=8$ & $\chi=\one{3}$ & $\Delta_{8,3}=3E_5^{\one{1},\, \ch{3}} \cdot E_3^{\, \ch{3}, \one{1}}$ \\
\hline
\rb $N=3$ & $k=10$ & $\chi=\one{3}$ & $\Phi_{10,3}=3E_5^{\one{1},\, \ch{3}} \cdot E_5^{\, \ch{3}, \one{1}}$ \\
\hline
\hline
\rb $N=4$ & $k=6$ & $\chi=\one{2}$ & $\Delta_{6,4}=-4E_3^{\one{1},\, \ch{4}} \cdot E_3^{\, \ch{4}, \one{1}}$ \\
\hline
\rb $N=4$ & $k=8$ & $\chi=\one{2}$ & $\Delta_{8,2}=-4E_3^{\one{1},\, \ch{4}} \cdot \Delta_{5,4,\ch{4}}$ \\
\hline
\hline
\rb $N=5$ & $k=6$ & $\chi=\one{5}$ & $ \Delta_{6,5}=(-2+i)E_3^{\one{1},\, \ch{5,1}} \cdot E_3^{\, \ch{5,3}, \one{1}}$ \\
\hline
\rb $N=5$ & $k=6$ & $\chi=\one{5}$ & $ \Delta_{6,5}=(-2-i)E_3^{\one{1},\, \ch{5,3}} \cdot E_3^{\, \ch{5,1}, \one{1}}$ \\
\hline
\end{tabular}
\end{table}

\bibliographystyle{alpha}
\bibliography{biblio}
\end{document}